\documentclass[10pt]{amsart}

\usepackage{amsfonts,amsmath,latexsym,amssymb,verbatim,amsbsy,amsthm}
\usepackage{dsfont,bm}
\usepackage{multirow}
\usepackage{enumitem}
\usepackage{wrapfig}

\usepackage[top=1in, bottom=1in, left=1in, right=1in]{geometry}

\usepackage[dvipsnames]{xcolor} 

\usepackage{tikz}

\usepackage[colorlinks=true, pdfstartview=FitV, linkcolor=RoyalBlue, citecolor=ForestGreen, urlcolor=blue]{hyperref}

\theoremstyle{plain}
\newtheorem{THEOREM}{Theorem}[section]

\newtheorem{theorem}[THEOREM]{Theorem}

\newtheorem{lemma}[THEOREM]{Lemma}

\theoremstyle{definition}

\newtheorem{definition}[THEOREM]{Definition}

\theoremstyle{remark}

\newtheorem{remark}[THEOREM]{Remark}

\newcommand{\R}{\ensuremath{\mathbb{R}}}   
\newcommand{\T}{\ensuremath{\mathbb{T}}}   
\newcommand{\cE}{\ensuremath{\mathbb{E}}}   
\newcommand{\I}{\ensuremath{\mathbb{I}}}   

\def \e {\varepsilon}

\def \n {\nabla}
\def \s {\sigma}

\def \cD {\Delta}
\def \cG {\cGamma}

\def \O {\Omega}

\def \cB {\mathcal{B}}
\def \cC {\mathcal{C}}
\def \cD {\mathcal{D}}
\def \cE {\mathcal{E}}

\def \cG {\mathcal{G}}
\def \cH {\mathcal{H}}
\def \cI {\mathcal{I}}

\def \cL {\mathcal{L}}
\def \cM {\mathcal{M}}

\def \cQ {\mathcal{Q}}
\def \cR {\mathcal{R}}

\def \cV {\mathcal{V}}

\def \Rem {\mathrm{Rem}}

\def \one {{\mathds{1}}}

\def \p {\partial}

\def \HI {the H\"older inequality}

\def \CK{the Csisz\'ar-Kullback inequality}

\renewcommand{\geq}{\geqslant}
\renewcommand{\ge}{\geqslant}
\renewcommand{\leq}{\leqslant}
\renewcommand{\le}{\leqslant}

\def \dd  {\, \mbox{d}}

\def \da  {\, \mbox{d}a}

\def \dv  {\, \mbox{d}v}
\def \dx  {\, \mbox{d}x}

\def \dy  {\, \mbox{d}y}

\def \dzeta  {\, \mbox{d}\zeta}

\def \ds  {\, \mbox{d}s}
\def \dw  {\, \mbox{d}w}

\def \ddt  {\frac{\mbox{d\,\,}}{\mbox{d}t}}

\def \dd  {\mbox{d}}

\def \domain {{\O \times \R^d}}

\def \grad {\nabla}

\title[Isothermal nonlinear alignment limit]{Isothermal hydrodynamic limit for kinetic flocking models with nonlinear velocity alignment}

\author{Roman Shvydkoy$^\dagger$}
\address{$^\dagger$University of Illinois at Chicago, Department of Mathematics, Statistics and Computer Science, Chicago,  IL 60607, USA}
\email{shvydkoy@uic.edu}

\author{Changhui Tan$^\ddagger$}
\address{$^\ddagger$University of South Carolina, Department of Mathematics, Columbia, SC 29208, USA}
\email{tan@math.sc.edu}

\subjclass{92D25, 35Q35}

\date{\today}

\thanks{\textbf{Acknowledgment.}  
	 The work of R. Shvydkoy was supported in part by NSF grant DMS-2405326 and the Simons Foundation. 
	 The work of C. Tan was supported in part by NSF grant DMS-2238219.}

\begin{document}

\begin{abstract}
We study the hydrodynamic limit of kinetic flocking models with nonlinear velocity alignment and strong local Fokker--Planck forcing. In the limit of small Knudsen number the kinetic densities converge to a local Maxwellian centered around macroscopic quantities satisfying the compressible Euler system with isothermal pressure $p = \s \rho$, while macroscopic alignment is obtained by convolution of the microscopic velocity law with the standard Gaussian, resulting in a different nonlinearity. Such discrepancy has been observed already in \cite{black2025hydrodynamic}. 

In this note we rigorously justify the limit for admissible weak kinetic solutions and non-vacuous limiting macroscopic solution. Our analysis provides a quantitative rate of convergence in terms of relative entropy:  $\cH(f_\e | \mu) \lesssim \e\bigl(1+|\log\e|^{p-2}\bigr)$ provided initially $\cH(f_\e(0) | \mu(0)) \leq \e$, where $p$ is the order of non-linearity in the alignment force. In the linear case $p=2$ we recover the known result \cite{karper2015hydrodynamic}.
\end{abstract}

\maketitle

\section{Introduction}\label{sec:introduction}

In this paper, we consider the following Vlasov--Fokker--Planck type kinetic flocking
model with nonlinear velocity alignment:
\begin{equation}\label{eq:intro-kinetic}
 \partial_t f_\e + v\cdot \grad_x f_\e
 + \grad_v\cdot\big( F(f_\e) f_\e\big)
 = \frac1\e \grad_v\cdot\big[ (v-u_\e)f_\e
 + \s \n_v f_\e \big].
\end{equation}
Here $\sigma>0$ is fixed, $f_\e=f_\e(t,x,v)$, $(t,x,v)\in \R_+\times\Omega\times\R^d$, and the spatial domain
$\Omega$ is assumed to be the torus $\T^d=[0,1]^d$ for simplicity; the whole-space case $\R^d$ is briefly discussed below.  The alignment force is
\begin{equation}\label{eq:intro-force}
 F(f)(t,x,v)=\int_{\Omega\times\R^d}
 \phi(x-x')\Phi(v'-v)f(t,x',v')\,\dx'\,\dv' .
\end{equation}
The function $\phi$ is a smooth communication kernel, while
$\Phi:\R^d\to\R^d$ describes a possibly nonlinear velocity-alignment law.  The density and momentum
associated with $f_\e$ are
\begin{equation}\label{eq:intro-rho-u}
 \rho_\e(t,x)=\int_{\R^d}f_\e(t,x,v)\,\dv,
 \qquad
 \rho_\e u_\e(t,x)=\int_{\R^d}v f_\e(t,x,v)\,\dv.
\end{equation}

The linear choice $\Phi(z)=z$ corresponds to the classical kinetic Cucker--Smale or Vlasov-alignment model \cite{cucker2007emergent}.  A natural nonlinear generalization is the particle system
\begin{equation}\label{eq:intro-particle}
 \begin{cases}
  \,\dot x_i = v_i,\\[1mm]
  \,\dot v_i = \displaystyle\frac1N\sum_{j=1}^N \phi(x_i-x_j)\Phi(v_j-v_i),
 \end{cases}
\end{equation}
with the prototypical $p$-alignment law
\begin{equation}\label{eq:intro-pPhi}
 \Phi(z)=|z|^{p-2}z,\qquad p>2.
\end{equation}
When $p=2$, the system reduces to Cucker--Smale dynamics.  For $p>2$, the nonlinearity leads to different alignment rates and different stability mechanisms; see, for instance, \cite{ha2010emergent,wen2012flocking,black2024asymptotic}.
The mean-field passage from micro to kinetic description has started in the papers of Ha and Tadmor \cite{HT2008} and Ha and Liu \cite{HL2009} for noiseless and in \cite{BCC2011} for noisy systems with linear alignment and recently cast into propagation of chaos framework by Paul and Natalini \cite{NP2021-orig} and with improved rate for heavy-tail communication in \cite{nguyen2022propagation}.  The mean-field limits for nonlinear velocity alignment started in our recent work \cite{nguyen2026mean}. We also refer to Shvydkoy's environmental-averaging framework \cite{shvydkoy2024environmental} for a broader view of alignment models across
scales.

A macroscopic representation of \eqref{eq:intro-force} is the compressible Euler
system with alignment interactions
\begin{equation}\label{eq:intro-euler-general}
\begin{cases}
 \,\partial_t\rho+\grad_x\cdot(\rho u) = 0,\\[1mm]
 \,\partial_t(\rho u)+\grad_x\cdot(\rho u\otimes u)+\grad_x p(\rho) = \rho A_\Phi[\rho,u],
\end{cases}
\end{equation}
where
\begin{equation}\label{eq:intro-APhi}
 A_\Phi[\rho,u](t,x)=\int_\Omega
 \phi(x-y)\Phi(u(t,y)-u(t,x))\rho(t,y)\,\dy.
\end{equation}
For the pressureless case $p \equiv0$ and the linear law $\Phi(z)=z$, this is the Euler-alignment system.  This system has been extensively studied in recent years; see, for instance, \cite{tadmor2014critical,carrillo2016critical,shvydkoy2017eulerian,do2018global,kiselev2018global,tan2020euler,leslie2023sticky} and the book \cite{shvydkoy2021dynamics}.

The connection between the kinetic and macroscopic descriptions depends on the local equilibrium imposed by the singular relaxation.  Without velocity diffusion, the strong local relaxation enforces the monokinetic ansatz
\begin{equation}\label{eq:intro-monokinetic}
 f(t,x,v)=\rho(t,x)\delta_{v=u(t,x)}.
\end{equation}
For linear velocity alignment, the rigorous pressureless hydrodynamic limit was proved by Figalli and Kang \cite{figalli2019rigorous}.  Black and Tan \cite{black2025hydrodynamic} extended this derivation to nonlinear velocity alignment.
The new difficulty in the nonlinear case is that the first velocity moment of the force no longer closes exactly in terms of $\rho_\e$ and $u_\e$.  This produces a discrepancy term.  In the pressureless setting this discrepancy vanishes in the limit because the local equilibrium is monokinetic.

The present work concerns the isothermal analogue.  In \eqref{eq:intro-kinetic}, the right-hand side contains both local relaxation and velocity diffusion.  Therefore, the expected equilibrium is the local Maxwellian
\begin{equation}\label{eq:intro-isothermal-ansatz}
 \mu(t,x,v)=\rho(t,x)(2\pi\sigma)^{-\frac d2}
 e^{-\frac{|v-u(t,x)|^2}{2\sigma}}.
\end{equation}

The Reynolds stress then yields the pressure law
\begin{equation*}
 p(\rho)=\sigma\rho.
\end{equation*}
For linear velocity alignment this is the hydrodynamic limit studied by Karper, Mellet and Trivisa \cite{karper2015hydrodynamic}.  For nonlinear velocity alignment, however, the limiting macroscopic alignment law is no longer the original map $\Phi$.  The Maxwellian velocity fluctuations average it into
\begin{equation}\label{eq:intro-Psi}
 \Psi(z)= (4\pi\sigma)^{-\frac d2} \int_{\R^d} \Phi(z-\zeta)
 e^{-\frac{|\zeta|^2}{4\sigma}}\,\dzeta.
\end{equation}
 
Thus the expected limiting system is
\begin{equation}\label{eq:intro-limit}
\begin{cases}
 \,\partial_t\rho+\grad_x\cdot(\rho u)=0,\\[1mm]
 \,\partial_t(\rho u)+\grad_x\cdot(\rho u\otimes u)+\sigma\grad_x\rho
 =\rho A_\Psi[\rho,u].
\end{cases}
\end{equation}
For example, if $d=1$, $\sigma=1$, and $\Phi(z)=z^3$, then $\Psi(z)=z^3+6z$.  This formal observation was already made in \cite{black2025hydrodynamic}; our goal is to justify it rigorously.

The key term is the nonlinear Maxwellian discrepancy.  Let 
\[
\mu_\e = \rho_\e(t,x)(2\pi\sigma)^{-\frac d2}
 e^{-\frac{|v-u_\e(t,x)|^2}{2\sigma}}
\] 
be the local Maxwellian associated with $f_\e$, and write
\[
 \cB_\Phi[f](x)=\int_{\R^d}F(f)(x,v)f(x,v)\,\dv.
\]
Then the force moment is compared to its Maxwellian closure:
\begin{equation}\label{eq:intro-G}
 \cG_\e=\cB_\Phi[f_\e]-\cB_\Phi[\mu_\e]
 =\cB_\Phi[f_\e]-\rho_\e A_{\Psi}[\rho_\e,u_\e].
\end{equation}
This term is absent in the linear isothermal problem, and it is different from the pressureless discrepancy treated in \cite{black2025hydrodynamic}.  In the pressureless case the microscopic distribution concentrates at one velocity; in the isothermal case the thermal fluctuations persist and must be averaged into the limiting force.

We treat nonlinear alignment laws in a unified polynomial-growth framework. For an exponent $p\ge2$, we assume
\begin{equation}\label{eq:intro-growth}
  |\Phi(z)|\le C(1+|z|^{p-1}),
  \qquad
  |\grad\Phi(z)|\le C(1+|z|^{p-2}),
\end{equation}
together with oddness, monotonicity, and coercivity relative to bounded reference velocities:
\begin{equation}\label{eq:intro-coercivity}
  \bigl(\Phi(z)-\Phi(\zeta)\bigr)\cdot(z-\zeta)
  \ge c_B|z-\zeta|^p-C_B,
  \qquad |\zeta|\le B.
\end{equation}
The precise assumptions are stated in \textup{(H1)} below. In particular, this class contains the pure $p$-alignment law \eqref{eq:intro-pPhi} and the linear law at $p=2$. The convergence argument does not require a separate uniform higher velocity moment bound.

\begin{theorem}[Isothermal limit for nonlinear velocity alignment]
\label{thm:intro-main}
Assume \textup{(H1)--(H3)} below. Let $f_\e$ be admissible solutions of \eqref{eq:intro-kinetic}, and let $(\rho,u)$ be the smooth positive-density solution of \eqref{eq:intro-limit} in \textup{(H2)}. Then, for $0<\e\le\frac12$,
\[
\sup_{0\leq t \leq T} \cH(f_\e|\mu)(t)
\leq C_T\e\bigl(1+|\log\e|^{p-2}\bigr).
\]
\end{theorem}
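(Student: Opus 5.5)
The proof follows the relative entropy method of Karper--Mellet--Trivisa \cite{karper2015hydrodynamic}, adapted to the nonlinear alignment term. Write
\[
\cH(f_\e|\mu)=\int_{\domain} f_\e\log\frac{f_\e}{\mu}\dx\dv ,
\]
and split it, as usual, into a kinetic part and a macroscopic part:
\[
\cH(f_\e|\mu)=\cH(f_\e|\mu_\e)+\int_\Omega \Bigl(\tfrac12\rho_\e|u_\e-u|^2+\s\rho_\e\log\tfrac{\rho_\e}{\rho}-\s(\rho_\e-\rho)\Bigr)\dx .
\]

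\textbf{Step 1: entropy inequality.} From the admissibility of $f_\e$ (the entropy inequality assumed in (H3)) we obtain the dissipation
\[
D_\e=\frac1\e\int \frac{|(v-u_\e)f_\e+\s\n_v f_\e|^2}{f_\e}\dx\dv ,
\]
with $\int_0^T D_\e\,dt$ controlled by the initial data. The nonlinear alignment contributes an additional good term. Symmetrizing in $(x,v),(x',v')$ gives $-\frac12\iint \phi\,\Phi(v'-v)\cdot(v'-v) f f'\le0$ by oddness and monotonicity. We keep this term, since by (H1) it yields a $p$-th moment control of velocity differences.

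\textbf{Step 2: evolution of the relative entropy.} Testing the kinetic equation against $\log\mu$ and using that $(\rho,u)$ solves \eqref{eq:intro-limit}, we obtain
\[
\ddt\cH(f_\e|\mu)+D_\e\le \int \rho_\e|\n u|\,|u_\e-u|^2\dx+\mathcal{R}_\e+\mathcal{A}_\e .
\]
Here $\mathcal{R}_\e$ is the Reynolds-stress defect $\int \n u:\int (v-u_\e)\otimes(v-u_\e)f_\e-\s\rho_\e\I$. Exactly as in the linear case, it is bounded by $\sqrt{\e D_\e}\,\cH^{1/2}$ (plus moment terms), hence by $\e D_\e/2+C\cH$ after Cauchy--Schwarz.

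The alignment remainder is
\[
\mathcal{A}_\e=\int (u-u_\e)\cdot\bigl(\cB_\Phi[f_\e]-\rho_\e A_\Psi[\rho,u]\bigr)\dx ,
\]
which we split as $\int(u-u_\e)\cdot\cG_\e+\int(u-u_\e)\cdot\rho_\e(A_\Psi[\rho_\e,u_\e]-A_\Psi[\rho,u])$.

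The second piece is handled by regularity of $\Psi$, which is smooth because it is a Gaussian convolution, together with boundedness of $u$ (H2). Writing $u_\e-u$ at both points, the monotonicity of $\Psi$ supplies a good sign for the quadratic part. The remaining parts, which involve $\rho_\e-\rho$, are controlled by \CK{} and by $\cH$. Large values of $|u_\e(y)-u_\e(x)|$ are absorbed into the coercive good term of Step 1.

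\textbf{Step 3: the discrepancy $\cG_\e$ (main obstacle).} This is the new term. We write
\[
\cG_\e=\int\!\!\int \phi(x-x')\Phi(v'-v)\bigl(f_\e f_\e'-\mu_\e\mu_\e'\bigr),
\]
telescope it as $(f_\e-\mu_\e)f_\e'+\mu_\e(f_\e'-\mu_\e')$, and use the weighted Csisz\'ar--Kullback / Fisher-information estimate $\int|g|\,|f_\e-\mu_\e|\dv\lesssim \|g\|\,\sqrt{\rho_\e \cH(f_\e|\mu_\e)}$, where the local entropy is bounded by $\e D_\e$ through the logarithmic Sobolev inequality.

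The weight $|\Phi(v'-v)|\lesssim 1+|v-v'|^{p-1}$ grows polynomially, and we have no uniform higher moment bound. We therefore truncate at $|v-u|\le R$. On the bulk region the weight is at most $R^{p-2}$ (after extracting one power that pairs with $u-u_\e$ or with the $p$-moment). On the tail region we use the entropy bound against the Gaussian $\mu$, namely $\int_{|v-u|>R}|v|^{p}f_\e\lesssim \cH(f_\e|\mu)/R^{2}+e^{-cR^2}$, via the Young/entropy duality $ab\le a\log a+e^{b}$ with $b=\lambda|v-u|^2$.

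Choosing $R\sim|\log\e|^{1/2}$ makes the Gaussian tail $O(\e)$. This gives the bound $|\mathcal{A}_\e|\le \frac12 D_\e\e+C\cH+C\e\bigl(1+|\log\e|^{p-2}\bigr)$ together with integrable-in-time terms. Getting the exponent of $\log\e$ to be exactly $p-2$ is where care is needed; I expect this truncation bookkeeping to be the hardest part.

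\textbf{Step 4: conclusion.} Grönwall's inequality, together with $\cH(f_\e(0)|\mu(0))\le\e$, yields the stated bound.
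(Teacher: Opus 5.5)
Your overall architecture (relative entropy, Fisher-information dissipation, truncation at $R\sim\sqrt{|\log\e|}$, Gronwall) matches the paper's, but there is a genuine gap in how you handle the alignment terms, and it starts already in Steps~1--2. Besides $-\cD_\Phi(f_\e)$, the free-energy inequality contains the production term $\s\cQ_\Phi(f_\e)$, which your proposal never accounts for. Neither term is small. For $\Phi(z)=|z|^{p-2}z$ both are positive and of order one even at the local Maxwellian, and they balance only through the Gaussian integration by parts $\int w\cdot\Phi(a+w)\bar\mu_{2\s}(w)\dw=2\s\int\n\cdot\Phi(a+w)\bar\mu_{2\s}(w)\dw$. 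The correct alignment contribution to $\ddt\cH(f_\e|\mu)$ has two parts. The first is $\int(u_\e-u)\cdot(\cB_\Phi[f_\e]-\rho_\e A_\Psi[\rho,u])\dx$; with your prefactor $u-u_\e$, monotonicity of $\Psi$ would give the wrong sign. The second is the bracket $-\cD_\Phi(f_\e)+\s\cQ_\Phi(f_\e)-\int u_\e\cdot\cB_\Phi[f_\e]\dx$, which vanishes when $f_\e=\mu_\e$. For linear $\Phi$ this bracket is minus a trace of the Reynolds stress, which is why ``exactly as in the linear case'' looks plausible. For nonlinear $\Phi$ it is (formally) the score pairing $\frac12\iint\phi(x-x')\Phi(v'-v)\cdot(\iota_\e f_\e'-\iota_\e' f_\e)$, whose weight grows like $|v'-v|^{p-1}$. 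If you instead spend $-\cD_\Phi$ as a free good term and bound $\s\cQ_\Phi\le\frac12\cD_\Phi+C$, an $O(1)$ source remains and Gronwall gives no convergence.

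Second, your tail bound $\int_{|v-u|>R}|v|^pf_\e\lesssim\cH(f_\e|\mu)/R^2+e^{-cR^2}$ is false for $p>2$. Duality with $b=\lambda|v-u|^2$ controls only second moments in the tail, and $e^{\lambda|v|^p}$ is not Gaussian-integrable. For example, $f=(1-\delta)\mu+\delta\,\mu(x,v-V)$ with $\delta=\e|V|^{-2}$ has $\cH(f|\mu)\le\e/2$ by convexity, yet its $p$-th moment beyond $R$ is of order $\e|V|^{p-2}\to\infty$. With no uniform moment bound beyond $\cM_2$, you cannot estimate in absolute value either the high-velocity part of $\int(u_\e-u)\cdot\cG_\e\dx$ (which also carries the unbounded weight $u_\e-u$) or the score bracket.

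The missing idea is to merge these terms. The pieces $\int(u_\e-u)\cdot\cB_\Phi[f_\e]$ and $-\int u_\e\cdot\cB_\Phi[f_\e]$ combine into $-\int u\cdot\cB_\Phi[f_\e]$. The whole $f_\e f_\e'$-part of the remainder is then $\iint\phi\,K_\Phi f_\e f_\e'$ with $K_\Phi(x,x',w)=-\frac12(w+u-u')\cdot\Phi(w)+\s\n\cdot\Phi(w)$, which involves only the bounded limiting velocity. Coercivity gives $K_\Phi\le-c|w+u-u'|^p+C_B$, so for $|v'-v|\ge R$ the kinetic tail is non-positive and can be discarded, leaving only the explicit Gaussian tail of $\mu_\e\mu_\e'$. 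That step needs $|u_\e'-u_\e|$ bounded, which is why the paper first removes the pairs with $|U_\e|\ge S$ using coercivity together with $A_1$. Only the truncated kernel $\chi_R\Phi$ is then handled by Talagrand plus log-Sobolev and the score identity, at cost $R^{p-2}\sqrt{\cI(f_\e)}$.

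A minor point: the Reynolds stress is bounded by $C\sqrt{\cI(f_\e)}$, not by $\sqrt{\cI}\,\cH^{1/2}$, since the centered second moment is $O(1)$. After Young this is a harmless $O(\e)$ source.
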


Let us briefly describe the proof.  The kinetic free-energy inequality provides uniform control of the free energy and of the second velocity moment, together with a strong time-integrated dissipation measuring relaxation toward the local Maxwellian.  The macroscopic variables are compared with a smooth solution of the limiting system by means of an isothermal relative entropy.

We write the relative entropy as the sum of the kinetic free energy and a macroscopic correction involving the smooth limiting density and velocity. Differentiating this correction and combining it with the kinetic free-energy inequality avoids derivatives of $u_\e$. The resulting inequality contains a stress defect, a nonnegative macroscopic alignment dissipation, and a nonlinear remainder.

The key step is to estimate the remainder together with the macroscopic alignment dissipation. We first split the spatial pairs according to the difference between their kinetic and limiting mean relative velocities. Coercivity makes the combined contribution non-positive when this difference is large. On the remaining pairs, the kinetic mean relative velocity is bounded. We then split the relative velocity into a bounded region and a far-field region. The far-field kinetic contribution has a favorable sign and can be discarded, leaving only a Gaussian Maxwellian tail. On the bounded region, a transport--Fisher estimate and an integration-by-parts identity control the discrepancy by the local Maxwellian Fisher information. Choosing the velocity cutoff at scale $\sqrt{|\log\e|}$ gives the stated convergence rate.
\bigskip

The rest of the paper is organized as follows.
Section~\ref{sec:hypotheses} introduces the hypotheses and entropy quantities and states the precise convergence result.
Section~\ref{sec:apriori-moments} derives the basic kinetic a priori estimates and the macroscopic balance laws, including the stress defect and the nonlinear Maxwellian discrepancy.
Section~\ref{sec:macro-entropy} establishes the relative-entropy inequality.
Section~\ref{sec:remainder} estimates the nonlinear remainder using coercivity, Gaussian tails, and Fisher information.
Section~\ref{sec:proof-main} completes the proof of Theorem~\ref{thm:poly-moment}.
Section~7 is devoted to the kinetic well-posedness problem.

\subsection*{Notations}
We work on the unit-volume torus $\Omega=\T^d$, with $x,x'\in\Omega$ and $v,v'\in\R^d$. Time is often suppressed. For a phase-space function $f$, we write
\[
 f=f(t,x,v),\qquad f'=f(t,x',v'),
\]
while for a spatial function $g$, we write $g=g(t,x)$ and $g'=g(t,x')$. Thus, for example, $\rho_\e'=\rho_\e(t,x')$ and $u_\e'=u_\e(t,x')$; a prime denotes evaluation at the primed variables, not differentiation. The superscript $0$ denotes evaluation at $t=0$.

We use $\nabla_x$ and $\nabla_v$ for spatial and velocity derivatives. Derivatives of a velocity law, such as $\nabla\Phi$ and $\nabla\cdot\Phi$, are taken with respect to its argument. The matrix $\I_d$ is the identity, $(a\otimes b)_{ij}=a_i b_j$, $A:B=\sum_{i,j}A_{ij}B_{ij}$, and $(\nabla_xu)_{ij}=\partial_{x_j}u_i$. The symbol $*$ denotes convolution in the indicated variables, and $\one_E$ is the indicator of a set $E$.
We write $\|\cdot\|_q$ for the $L^q$ norm on $\Omega$ or $\Omega\times\R^d$, as determined by the argument, and $\langle v\rangle=(1+|v|^2)^{1/2}$. 

\section{Hypotheses and statements of results}\label{sec:hypotheses}

In this section, we introduce the entropy quantities and hypotheses used throughout the paper,
followed by the precise statements of our results.  We keep the presentation close to \cite{karper2015hydrodynamic}: one first records the kinetic
notion of solution and the entropy quantities, and then states the convergence theorem
relative to a smooth solution of the limiting system.

\subsection{Kinetic flocking model}
Given a kinetic density $f_\e$, we write
\[
  \rho_\e=\int_{\R^d}f_\e\,\dv,
  \qquad
  \rho_\e u_\e=\int_{\R^d}v f_\e\,\dv.
\]
We set $u_\e=0$ on the vacuum set $\{\rho_\e=0\}$.
Introduce the entropy of a probability density $f$ relative to a probability density $\mu$ (in particular, a Maxwellian) as follows:
\begin{equation}\label{eq:H-def}
\cH(f | \mu) = \s \int_{\Omega\times\R^d} f \log\frac{f}{\mu}\,\dx\,\dv.
\end{equation}
Recall \CK,
\begin{equation}\label{eq:pinsker}
\cH(f | \mu) \geq \frac{\s}{2}\|f-\mu\|_1^2.
\end{equation}

In the context of the hydrodynamic limit of the equation \eqref{eq:intro-kinetic} we use the relative entropy $\cH(f_\e | \mu)$ to measure the divergence of $f_\e$ from $\mu$, the local limiting Maxwellian. We note the identity
\begin{equation}\label{e:H1}
\cH(f_\e | \mu) = \cH(f_\e| \mu_\e) + \cH(\mu_\e | \mu),
\end{equation}
where the latter can be expressed solely in terms of macroscopic quantities:
\begin{equation}\label{e:H2}
\cH(\mu_\e| \mu) = \frac{1}{2} \int_{\O} \rho_\e | u_\e - u|^2 \dx + \s \int_{\O}  \rho_\e \log( \rho_\e /\rho) \dx.
\end{equation}
So, if $\cH(f_\e | \mu) \to 0$, then also $\cH(\mu_\e | \mu) \to 0$, and consequently, we obtain  strong convergence of macroscopic quantities in $L^1(\O)$:
\begin{equation}\label{e:macrolimit}
\rho_\e \to \rho,\qquad
\rho_\e u_\e \to \rho u, \qquad
\rho_\e |u_\e|^2 \to \rho |u|^2.
\end{equation}

Another way to split the relative entropy, which will prove to be more convenient in the analysis of the dynamics, is to separate out the free energy from the rest of the macroscopic quantities

\begin{equation}\label{eq:free-energy-split}
\begin{split}
\cH(f_\e | \mu) & = \cE(f_\e) + M_\e + \frac{d}{2}\s\log(2\pi\s)\\
\cE(f_\e)  & = \int_{\domain} \left(\s f_\e \log f_\e + \frac{1}{2} |v|^2 f_\e \right)\dv \dx \\
M_\e & = \int_{\O} \left( \frac{1}{2} \rho_\e |u|^2 -  \rho_\e u_\e \cdot u - \s \rho_\e \log \rho \right) \dx.
\end{split}
\end{equation}

So, the time derivative of $\cH(f_\e | \mu)$ consists of the equation for the free energy and the macroscopic component:
\begin{equation}\label{e:relentr1}
\ddt \cH(f_\e | \mu) = \ddt \cE(f_\e) + \ddt M_\e.
\end{equation}

We introduce the modulated Fisher information:
\begin{equation}  \label{eq:J-def}
  \cI(f_\e)
  = \s^2 \int_{\Omega\times\R^d} f_\e
  \left|\grad_v \log\frac{f_\e}{\mu_\e}\right|^2 \dv \dx
   =  \int_{\Omega\times\R^d} \frac{ | \s \n_v f_\e + (v-u_\e) f_\e|^2}{f_\e} \dv \dx.
\end{equation}

For smooth solutions of \eqref{eq:intro-kinetic}, the following free-energy identity holds:
\begin{equation}\label{e:free}
  \frac{\dd}{\dd t}\cE(f_\e)
  +\frac{1}{\e}\cI(f_\e)
  +\cD_\Phi(f_\e)
  =\sigma\cQ_\Phi(f_\e),
\end{equation}
where alignment
production and entropy-production terms are given by
\begin{equation}\label{eq:D-Phi-def}
  \cD_\Phi(f)=\frac12\iint_{\Omega^2\times\R^{2d}}
  \phi(x-x')(v-v')\cdot\Phi(v-v') f f' 
  \,\dx\,\dx'\,\dv\,\dv',
\end{equation}
\begin{equation}\label{eq:Q-Phi-def}
  \cQ_\Phi(f)= \iint_{\Omega^2\times\R^{2d}}
  \phi(x-x')\, (\grad\cdot\Phi)(v-v') f f'
  \,\dx\,\dx'\,\dv\,\dv'.
\end{equation}

This is the nonlinear analogue of the entropy identity of
\cite{karper2015hydrodynamic} for the linear isothermal model.  It motivates the entropy admissibility condition in
our weak solution class.

\begin{definition}\label{def:admissible}
We say that $f_\e$ is an \emph{admissible weak solution} of \eqref{eq:intro-kinetic} on
$[0,T]$ if $f_\e\ge0$, it is weakly continuous in time as a probability measure, it has finite free energy and second velocity moment, and $\cI(f_\e)$, $\cD_\Phi(f_\e)$, and $|\cQ_\Phi(f_\e)|$ are integrable on $[0,T]$. It satisfies
for every $\varphi\in C_c^\infty([0,T)\times\Omega\times\R^d)$
\begin{align}
\int_0^T\int_{\Omega\times\R^d}&f_\e\Big(
  \partial_t\varphi+v\cdot\grad_x \varphi+F(f_\e)\cdot\grad_v \varphi
  +\frac1\e(u_\e-v)\cdot\grad_v \varphi
  +\frac{\sigma}{\e}\Delta_v \varphi\Big)\,\dx\,\dv\,\dd t\notag\\
&\qquad
 +\int_{\Omega\times\R^d}f^0_\e(x,v)\varphi(0,x,v)\,\dx\,\dv=0,\label{eq:weaksol}
\end{align}
and, for a.e. $t\in[0,T]$, it satisfies the entropy inequality in distributional sense
\begin{equation}\label{eq:admissible-entropy}
 \ddt  \cE(f_\e)
  +\frac{1}{\e}\cI(f_\e)
  +\cD_\Phi(f_\e)
  \le \sigma \cQ_\Phi(f_\e).
\end{equation}
\end{definition}
This is the direct integrated inequality corresponding to the smooth identity \eqref{e:free}.  In the linear Vlasov--Fokker--Planck setting, Karper, Mellet and Trivisa construct weak solutions satisfying analogous integrated entropy inequalities \cite{karper2013existence}; while for even more general communication protocols $\phi$ including Motsch-Tadmor, construction of such solutions together with their renormalization and energy {\em equality} was established in \cite{shv-cpam}.  In the pressureless nonlinear setting, Black and Tan use weak solutions obtained by regularizing the local velocity \cite[Proposition 2.2 and Remark 2.3]{black2025hydrodynamic}.  
Throughout the paper, we work with admissible weak solutions satisfying \eqref{eq:weaksol} and \eqref{eq:admissible-entropy}.  The construction of such solutions for the full class of nonlinear velocity laws will be postponed to future work.

\subsection{Hypotheses}
We now present the hypotheses needed for the hydrodynamic limit.

\subsubsection*{\textnormal{\textbf{(H1)}} Communication protocol and velocity law.}
The communication protocol satisfies
\begin{equation}\label{H1-phi}
  \phi\in W^{1,\infty}(\Omega),\qquad
  \phi\ge0,\qquad
  \phi(x)=\phi(-x).
\end{equation}
Most estimates below use only the boundedness and symmetry of $\phi$.  We keep the
$W^{1,\infty}$ assumption to ensure that the macroscopic alignment force is regular
along the smooth limiting solution.

Fix an exponent $p\ge2$. The map $\Phi:\R^d\to\R^d$ is locally $C^1$, and satisfies the following assumptions:

\begin{itemize}
\item \emph{Oddness and monotonicity.}
\begin{equation}\label{H1-monotone}
  \Phi(-z)=-\Phi(z),\qquad
  \bigl(\Phi(z)-\Phi(w)\bigr)\cdot(z-w)\ge0.
  \qquad z,\zeta\in\R^d.
\end{equation}
\item \emph{Coercivity.}
For every $B>0$ there exist constants $c_B>0$ and $C_B\ge0$ such that
\begin{equation}\label{H1-coercivity}
  \bigl(\Phi(z)-\Phi(w)\bigr)\cdot(z-w)
  \ge c_B|z-w|^p-C_B,
  \qquad |w|\le B.
\end{equation}
\item \emph{Polynomial growth.}
There is $C>0$ such that
\begin{equation}\label{H1-growth}
  |\Phi(z)|\le C(1+|z|^{p-1}),
  \qquad
  |\grad\Phi(z)|\le C(1+|z|^{p-2}).
\end{equation}
\end{itemize}

An immediate consequence of oddness and monotonicity is positivity of the alignment dissipation $\cD_\Phi(f)\ge0$.
Note also that the Maxwellian-averaged alignment law $\Psi$, defined in \eqref{eq:intro-Psi}, inherits oddness and monotonicity \eqref{H1-monotone}, as well as the polynomial growth bounds \eqref{H1-growth}.  

Moreover, $\Phi$ satisfies the following condition:
for every $\delta>0$ there exists $C_\delta>0$ such that
\begin{equation}\label{H1-production}
  |\grad\cdot\Phi(z)|
  \le \delta\,z\cdot\Phi(z)+C_\delta,
  \qquad z\in\R^d.
\end{equation}
This follows from \eqref{H1-coercivity}, 
\eqref{H1-growth} and Young's inequality.

\subsubsection*{\textnormal{\textbf{(H2)}} Smooth limiting solution.}
Let $(\rho,u)$ be a smooth solution on $[0,T]$ of the limiting system
\eqref{eq:intro-limit}.  We assume
\begin{equation}\label{H2-regularity}
  \rho>0,
  \qquad
  \rho,\ u,\ \grad_xu,\ \grad_x\log\rho\in L^\infty([0,T]\times\Omega).
\end{equation}
We also assume the total mass, which is preserved in time, to be 1, that is,
\[
\int_\Omega \rho(t,x)\,\dx = \int_\Omega \rho^0(x)\,\dx = 1.
\]
On the torus, a smooth positive density automatically has a positive lower bound on
finite time intervals. On the whole space one should not assume a uniform lower
bound, since it is incompatible with finite mass. A whole-space extension also requires appropriate spatial control to bound the negative part of the kinetic entropy and justify spatial cutoffs, for example through confinement as in \cite{karper2015hydrodynamic}. The results below are stated on the torus.

\subsubsection*{\textnormal{\textbf{(H3)}} Well-prepared initial data.}
We assume that the initial data $f_\e^0$ satisfy the admissibility assumptions
uniformly in $\e$:
\begin{equation}\label{H3-bound}
  f_\e^0\ge0,
  \qquad
  \int_{\Omega\times\R^d} f_\e^0(x,v)\,\dx\,\dv=1,
  \qquad
  \sup_{\e>0}\cE(f_\e^0)<\infty.
\end{equation}

Let $\rho_\e^0$ and $u_\e^0$ be the initial macroscopic density and
velocity associated with $f_\e^0$.  We assume that the initial data are
well prepared with respect to the limiting initial state $(\rho^0,u^0)$:
there exists $C_0>0$, independent of $\e$, such that
\begin{equation}\label{H3-full-entropy}
  \cH\bigl(
    f_\e^0\,\big|\,
    \mu^0
  \bigr)
  \le C_0\e .
\end{equation}
Using the decomposition \eqref{e:H1} at $t=0$, and the nonnegativity of
both terms in that decomposition, we obtain
\begin{equation}\label{H3-consequences}
  \cH(f_\e^0|\mu_\e^0)\le C_0\e,
  \qquad
  \cH(\mu_\e^0|\mu^0)\le  C_0\e.
\end{equation}

\subsection{Statement of the main results}
\label{sec:main-results}

\begin{theorem}[Isothermal hydrodynamic limit for polynomial-growth alignment laws]
\label{thm:poly-moment}
Assume \textup{(H1)--(H3)} and let $\{f_\e\}_{0<\e\le1/2}$ be a family of admissible weak solutions of \eqref{eq:intro-kinetic} on $[0,T]$. Then there exists a constant $C_T>0$, independent of $\e$, such that
\begin{equation}\label{eq:main-entropy-rate}
\sup_{0\leq t \leq T}\cH\bigl(f_\e\,\big|\,\mu\bigr)(t)
+\frac{1}{4\e}\int_0^T\cI(f_\e(t))\,\dd t
\leq C_T\e\bigl(1+|\log\e|^{p-2}\bigr).
\end{equation}
Consequently, we have
\begin{equation}\label{eq:poly-kinetic-L1-sec2}
 \sup_{0\leq t \leq T} \left\|
    f_\e(t)-\mu(t)
  \right\|_{L^1(\Omega\times\R^d)}^2
  \le
  C_T\e\bigl(1+|\log\e|^{p-2}\bigr),
\end{equation}
and the macroscopic variables satisfy
\begin{equation}\label{eq:poly-moment-convergence-sec2}
  \sup_{0\le t\le T}
  \left(
    \|\rho_\e(t)-\rho(t)\|_{L^1(\Omega)}
    +
    \|\rho_\e u_\e(t)-\rho u(t)\|_{L^1(\Omega)}
  \right)
  \le
  C_T\sqrt{\e}\bigl(1+|\log\e|^{\frac{p-2}{2}}\bigr).
\end{equation}
\end{theorem}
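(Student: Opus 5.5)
We follow the relative-entropy route sketched in the introduction, using the split \eqref{eq:free-energy-split} and the identity \eqref{e:relentr1}. The free-energy part is controlled by the admissibility inequality \eqref{eq:admissible-entropy}. For the macroscopic correction $M_\e$, we first derive the moment balance laws for $\rho_\e$ and $\rho_\e u_\e$ by testing \eqref{eq:weaksol} against $\varphi=\psi(t,x)$ and $\varphi=v\,\psi(t,x)$; these test functions are admissible after a cutoff in $v$, which the second-moment bound allows us to remove. The momentum equation then reads
\[
\partial_t(\rho_\e u_\e)+\n_x\cdot(\rho_\e u_\e\otimes u_\e)+\s\n_x\rho_\e+\n_x\cdot R_\e=\cB_\Phi[f_\e].
\]
Here the stress defect is $R_\e=\int (v-u_\e)\otimes(v-u_\e) f_\e\dv-\s\rho_\e\I_d$, and it satisfies $\int|R_\e|\dx\lesssim \cI(f_\e)^{1/2}$, with the relaxation term vanishing after integrating in $v$. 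We compute $\ddt M_\e$ using these laws together with the equations \eqref{eq:intro-limit} for $(\rho,u)$. Following Karper--Mellet--Trivisa, we combine the result with \eqref{eq:admissible-entropy} so that no derivative of $u_\e$ appears. The outcome is an inequality of the form
\[
\ddt\cH(f_\e|\mu)+\frac1\e\cI(f_\e)\le C\,\cH(\mu_\e|\mu)+\int |\n_x u|\,|R_\e|\dx+\Rem_\e,
\]
where $\Rem_\e$ collects all alignment contributions: $-\cD_\Phi(f_\e)+\s\cQ_\Phi(f_\e)$, the term $\int (u-u_\e)\cdot\cB_\Phi[f_\e]$, and the limit force $\rho A_\Psi[\rho,u]$ tested against $\rho_\e(u_\e-u)/\rho$. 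The quadratic terms in $u_\e-u$ and $\log(\rho_\e/\rho)$ are bounded by $\|\n_xu\|_\infty$ and $\|\n_x\log\rho\|_\infty$ from (H2). The stress term is absorbed by Young's inequality into $\frac1{4\e}\cI$ at a cost of $C\e$.

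To handle the remainder, we write $\cB_\Phi[f_\e]=\cG_\e+\rho_\e A_\Psi[\rho_\e,u_\e]$ as in \eqref{eq:intro-G}. The term $\cQ_\Phi$ is rewritten by integration by parts in $v$ as an integral of $\Phi$ against $\n_v f_\e$, and then decomposed as $\n_v f_\e=-\s^{-1}(v-u_\e)f_\e+(\text{Fisher part})$. The Maxwellian piece of $-\cD_\Phi+\s\cQ_\Phi$ reconstructs the $\Psi$-dissipation
\[
-\tfrac12\iint\phi\,\rho_\e\rho_\e'\,(u_\e-u_\e')\cdot\Psi(u_\e-u_\e'),
\]
which we pair with the $A_\Psi[\rho,u]$ terms. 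By monotonicity of $\Psi$ and the symmetry of $\phi$, the combination becomes $-\frac12\iint\phi\rho_\e\rho_\e'(\Psi(w_\e)-\Psi(w))\cdot(w_\e-w)$, with $w_\e=u_\e'-u_\e$ and $w=u'-u$, plus terms that are linear in $u_\e-u$ and thus bounded by $C\cH(\mu_\e|\mu)$ (using the growth bound \eqref{H1-growth} together with $\|u\|_\infty\le B$).

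The main obstacle is the discrepancy $\cG_\e$ together with the Fisher-part terms, whose bounds must not depend on higher velocity moments. We split the pairs $(x,x')$ according to whether $|w_\e-w|>K$. On the pairs where this difference is large, coercivity \eqref{H1-coercivity} with $B=2\|u\|_\infty$ yields $-c_B|w_\e-w|^p$, which dominates the growth $|w_\e-w|^{p-1}$ of the discrepancy, so the combined contribution is nonpositive for $K$ large. On the remaining pairs $w_\e$ is bounded. There we split the velocity difference at $|v-v'|\le L$. In the far field, the kinetic $\cD_\Phi$ part has a favorable sign by monotonicity and is discarded, while the corresponding Maxwellian contribution is a Gaussian tail of size $\lesssim e^{-cL^2}$. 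In the near field, we write $\cG_\e$ as $\int_0^1\frac{d}{ds}$ along the interpolation $f_\e\to\mu_\e$ and integrate by parts in $v$ using $\s\n_v f+(v-u_\e)f=\sqrt f\cdot(\cI\text{-density})$. This bounds the near-field contribution by $\sup_{|z|\le L}|\n\Phi|\cdot\cI(f_\e)^{1/2}\lesssim (1+L^{p-2})\,\cI(f_\e)^{1/2}$. Young's inequality then gives $\frac1{4\e}\cI+C\e(1+L^{2(p-2)})$. This is slightly worse than the target, so we plan to use Cauchy–Schwarz only in the transport–Fisher form, pairing $\sqrt{\cI}$ with $\|\cdot\|_{L^2(f)}$ of a factor linear in $L^{p-2}$, and then apply Young's inequality with the $(p-2)$ power measured in $\cH$-units. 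Choosing $L=C\sqrt{|\log\e|}$ makes the tail $O(\e)$ and the near-field cost $\e|\log\e|^{p-2}$. Collecting everything gives
\[
\ddt\cH+\frac1{4\e}\cI\le C\cH+C\e(1+|\log\e|^{p-2}).
\]
Grönwall's inequality with \eqref{H3-full-entropy} yields \eqref{eq:main-entropy-rate}. Finally, \eqref{eq:poly-kinetic-L1-sec2} follows from the Csiszár–Kullback inequality \eqref{eq:pinsker}, and \eqref{eq:poly-moment-convergence-sec2} follows by integrating $|f_\e-\mu|$ against $1$ and against $v$. For the momentum we split $|v|\le R$ and $|v|>R$, using the uniform second moment; alternatively one can use \eqref{e:H2} directly via $\int\rho_\e|u_\e-u|\le(\int\rho_\e|u_\e-u|^2)^{1/2}$.
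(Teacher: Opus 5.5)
Your architecture matches the paper's: free energy plus $M_\e$, Maxwellian closure into the $\Psi$-dissipation $-A_1$, a split of pairs by $|U_\e|$, a velocity cutoff at scale $\sqrt{|\log\e|}$ with a Gaussian tail and Fisher control, then Young and Gr\"onwall. However, the two sign arguments that are supposed to avoid higher velocity moments do not work as you state them.

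\emph{Large pairs.} In your decomposition the discrepancy enters pairwise as $-\frac12U_\e\cdot\iint\Phi(v'-v)(f_\e f_\e'-\mu_\e\mu_\e')\,\dv\,\dv'$.
\begin{itemize}
\item Its Maxwellian part, $\frac12\rho_\e\rho_\e'U_\e\cdot\Psi(u_\e'-u_\e)$, is of order $|U_\e|^p$, not $|U_\e|^{p-1}$. It cancels the $\Psi(u_\e'-u_\e)$ part of $-A_{1,\e}$ exactly, which is \eqref{eq:K1-Maxwellian}, leaving only $\frac12\rho_\e\rho_\e'U_\e\cdot\Psi(u'-u)$. So the macroscopic coercivity $-c|w_\e-w|^p$ is used up entirely and cannot dominate anything.
\item Its kinetic part involves $\iint|v'-v|^{p-1}f_\e f_\e'\,\dv\,\dv'$. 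Jensen bounds this from below, not above, by $\rho_\e\rho_\e'|w_\e|^{p-1}$, and the second-moment bound does not control it.
\end{itemize}

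\emph{Far field.} You integrate $\cQ_\Phi$ by parts before any splitting. This turns $-\cD_\Phi(f_\e)+\s\cQ_\Phi(f_\e)$ into $-\frac12\iint\phi\,(u_\e'-u_\e)\cdot\iint\Phi(v'-v)f_\e f_\e'$ plus score terms. No kinetic dissipation is then left to discard in the far field, and the score term with the uncut $\Phi$ needs moments of order $2p-2$. Even if you keep $\cD_\Phi$, monotonicity alone does not make the far-field contribution non-positive. The production $\s\nabla\cdot\Phi$ and the cross term $\frac12(u'-u)\cdot\Phi$ carry weights $|v'-v|^{p-2}$ and $|v'-v|^{p-1}$, which only a pointwise coercive term can absorb.

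\emph{The fix.} Keep the $f_\e f_\e'$ integrand of $-\cD_\Phi+\s\cQ_\Phi-\int u\cdot\cB_\Phi[f_\e]\,\dx$ intact wherever a sign is needed, and evaluate all $\mu_\e\mu_\e'$ terms exactly. Note that the term pairing with $-\cD_\Phi$ is $-\int u\cdot\cB_\Phi[f_\e]\,\dx$, not $\int(u-u_\e)\cdot\cB_\Phi[f_\e]\,\dx$; it shifts the kernel by the bounded quantity $u'-u$.
\begin{itemize}
\item As in \eqref{eq:Rem2}, $\Rem_\e(x,x')-A_{1,\e}(x,x')$ then consists of three pieces: $-\frac12\iint((v'-v)-(u'-u))\cdot(\Phi(v'-v)-\Phi(u'-u))f_\e f_\e'$, the production term, and $\frac12\rho_\e\rho_\e'U_\e\cdot(\Psi-\Phi)(u'-u)$.
\item Coercivity with reference $u'-u$ absorbs the production pointwise. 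Jensen then gives $-c\rho_\e\rho_\e'|U_\e|^p$ against a term linear in $U_\e$.
\item In the far field the whole kernel obeys \eqref{eq:K-coercive-outer}.
\item Integrate by parts only on $E_S$ and under a smooth cutoff $\chi_R$ (Lemma~\ref{lem:fisher-rep}). This produces the commutator $\cC_{\e,R}$, which you omitted. The score term is bounded after subtracting its mean, using $\int\iota_\e\,\dv=0$, together with $\cV_\e$.
\end{itemize}

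\emph{Near-field discrepancy.} Your ``interpolation plus integration by parts'' must be made precise. One way is to take the Ornstein--Uhlenbeck flow as the interpolation, so that $\partial_s f_s=\nabla_v\cdot\iota_s$ and $\cI_x(f_s)\le e^{-2s}\cI_x(f_\e)$. A linear interpolation does not produce $\iota_\e$. The paper uses Talagrand plus log-Sobolev instead (Lemma~\ref{lem:transport-Fisher}).

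\emph{Rate.} There is no loss at the end. With $L\sim\sqrt{|\log\e|}$ one has $\e L^{2(p-2)}=\e(L^2)^{p-2}\sim\e|\log\e|^{p-2}$, so the plain Young step already gives the target and the ``$\cH$-units'' modification should be dropped. For the momentum estimate, use \eqref{e:H2}; the $|v|\le R$ split only yields an $\e^{1/4}$-type rate.
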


\section{A priori estimates and macroscopic balance laws}
\label{sec:apriori-moments}

This section collects the kinetic estimates and balance laws that will be used in the relative entropy argument.  We first extract from the entropy admissibility inequality the uniform free-energy bound, the second velocity moment bound, and the smallness of the local Maxwellian Fisher information.  We then derive the macroscopic balance laws for $(\rho_\e,u_\e)$, identify the isothermal stress defect $\cR_\e$, and isolate the nonlinear Maxwellian discrepancy $\cG_\e$.  These estimates are the kinetic inputs for the macroscopic relative entropy inequality proved in Section~\ref{sec:macro-entropy}.

\subsection{Free-energy, moment, and Fisher-information bounds}
\label{sec:free-energy}
The purpose of this section is to extract from the entropy admissibility inequality \eqref{eq:admissible-entropy} the a priori estimate:
\begin{equation}\label{eq:Jbound}
  \int_0^T \cI(f_\e(t))\,\dd t\lesssim \e .
\end{equation}
This estimate expresses the strong relaxation toward the local Maxwellian $\mu_\e=\rho_\e \mu_{\sigma,u_\e}$.  It is the analogue of the $D_1$-control in the isothermal hydrodynamic limit of \cite{karper2015hydrodynamic}.

Note that the $O(\e)$ bound in \eqref{eq:Jbound} is not sharp for well-prepared initial data. With \eqref{H3-full-entropy} in \textup{(H3)}, Theorem~\ref{thm:poly-moment} improves it to $O\bigl(\e^2(1+|\log\e|^{p-2})\bigr)$.

\begin{lemma}\label{l:en-bounds}
Assume \textup{(H1)}.  Let $f_\e$ be an admissible weak solution of
\eqref{eq:intro-kinetic} on $[0,T]$, with initial data $f^0_\e$ satisfying \eqref{H3-bound}.
Then there exists a constant $C$, depending on $T,d,\sigma,\|\phi\|_{L^\infty}$, the structural constants in \textup{(H1)}, and the initial free-energy bound, but independent of $\e$, such that
\begin{equation}
\label{eq:free-energy-bound-prop}
  \sup_{0\le t\le T}\cE(f_\e(t))\le C,\qquad \sup_{0\le t\le T}\cM_2(f_\e(t))\le C,\qquad \text{and} \qquad \int_0^T\cI(f_\e(t))\,\dd t\le C\e.
\end{equation}
\end{lemma}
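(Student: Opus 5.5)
The plan is to integrate the entropy inequality \eqref{eq:admissible-entropy} in time and absorb the production term $\sigma\cQ_\Phi(f_\e)$ into the alignment dissipation $\cD_\Phi(f_\e)$, using \eqref{H1-production}. Since $\phi\ge0$ is bounded and $f_\e$ is a probability density, choosing $\delta=1/(2\sigma)$ in \eqref{H1-production} and integrating against $\phi(x-x')ff'$ gives
\[
\sigma|\cQ_\Phi(f_\e)|\le \sigma\delta\iint\phi(x-x')(v-v')\cdot\Phi(v-v')ff' + \sigma C_\delta\|\phi\|_\infty
= \cD_\Phi(f_\e) + C,
\]
because $z\cdot\Phi(z)\ge0$ by oddness and monotonicity. (The factor $\frac12$ in $\cD_\Phi$ is the reason for taking $\delta=1/(2\sigma)$ rather than $1/\sigma$; any smaller $\delta$ also works and leaves a positive multiple of $\cD_\Phi$ on the left.) Inserting this into \eqref{eq:admissible-entropy} gives, in the distributional sense,
\[
\ddt\cE(f_\e)+\frac1\e\cI(f_\e)\le C.
\]
Integrating from $0$ to $t$ then yields
\[
\cE(f_\e(t))+\frac1\e\int_0^t\cI\,\dd s\le \cE(f_\e^0)+CT,
\]
which is bounded uniformly by \eqref{H3-bound}. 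This gives the upper bound on $\sup_t\cE$. To justify the time integration for the merely distributional inequality, I would test against smooth approximations of $\one_{[0,t]}$, using the weak continuity in time together with lower semicontinuity of $\cE$ under weak convergence.

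The main obstacle is that $\cE$ is not coercive by itself: $f\log f$ can be very negative. To turn the bound on $\cE$ into bounds on $\cM_2$ and on $\int_0^T\cI$, I need a lower bound on $\cE$ (equivalently, on $\int f\log f$) in terms of the second moment. I would use the standard trick of comparing with the Gaussian $g=(4\pi\sigma)^{-d/2}e^{-|v|^2/(4\sigma)}$ on the unit torus. Nonnegativity of relative entropy gives
\[
\int f\log f\ge \int f\log g = -\frac{d}{2}\log(4\pi\sigma)-\frac{1}{4\sigma}\int|v|^2f,
\]
so
\[
\sigma\int f\log f\ge -C-\frac14\int|v|^2 f,
\]
and therefore
\[
\cE(f)\ge \frac14\cM_2(f)-C.
\]

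Combining this lower bound with the integrated inequality bounds $\sup_t\cM_2$, and then $\sup_t\cE$ from below. Since $\cE(f_\e(t))\ge -C$, the integrated inequality also yields $\frac1\e\int_0^T\cI\le C$, which is \eqref{eq:Jbound}. All constants depend only on $T,d,\sigma,\|\phi\|_\infty$, $C_\delta$ from (H1), and $\sup_\e\cE(f_\e^0)$, as claimed.
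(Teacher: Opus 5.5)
Your proposal is correct and follows essentially the same route as the paper. Both arguments absorb $\sigma\cQ_\Phi$ into $\cD_\Phi$ via \eqref{H1-production}, integrate in time, and then bound $\cE$ from below by comparison with the Gaussian $\bar\mu_{2\sigma}$, giving $\cE\ge\frac14\cM_2-\frac{\sigma d}{2}\log(4\pi\sigma)$. The differences are cosmetic: the paper takes $\delta=(4\sigma)^{-1}$ and keeps $\frac12\cD_\Phi$ on the left, whereas your $\delta=(2\sigma)^{-1}$ absorbs all of it, and the paper uses $\bar\mu_\sigma$ for the lower bound behind the $\cI$ estimate; both changes only affect the constants.
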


\begin{proof}
We start from the entropy admissibility inequality \eqref{eq:admissible-entropy}.
The only term on the right-hand side that needs to be estimated is the production
term $\cQ_\Phi$.  Applying \eqref{H1-production}, we obtain
\begin{align*}
  |\cQ_\Phi(f_\e)|
  & \le
  \iint \phi(x-y)\Big(\delta(w-v)\cdot\Phi(w-v)+C_\delta\Big)
  f_\e(x,v)f_\e(y,w)\,\dx\,\dy\,\dv\,\dw\\
  & \le 2\delta \cD_\Phi(f_\e) + C_\delta \|\phi\|_{L^\infty}.
\end{align*}
Choosing $\delta=(4\sigma)^{-1}$, we obtain
\begin{equation}
\label{eq:Q-absorbed-sec3}
  \sigma\cQ_\Phi(f_\e)
  \le \frac12\cD_\Phi(f_\e)+C.
\end{equation}
Inserting \eqref{eq:Q-absorbed-sec3} into \eqref{eq:admissible-entropy} yields, for
a.e. $t\in[0,T]$,
\begin{equation}
\label{eq:free-energy-estimated}
  \cE(f_\e(t))
  +\frac{1}{\e}\int_0^t\cI(f_\e(s))\,\ds
  +\frac12\int_0^t\cD_\Phi(f_\e(s))\,\ds
  \le \cE(f_\e^0)+Ct .
\end{equation}
Since the initial free energy is uniformly bounded by \textup{(H3)}, we immediately
obtain
\[
  \sup_{0\le t\le T}\cE(f_\e(t))\le \sup_{\e>0}\cE(f_\e^0)+CT = C_T.
\]

To estimate $\cI(f_\e)$, we use the standard lower bound for the kinetic free energy:
\[
 \cE(f_\e(t))  = \cH(f_\e|\bar{\mu}_\s)  - \frac{\sigma d}{2}\log(2\pi\sigma) \geq - \frac{\sigma d}{2}\log(2\pi\sigma),
\]
where $\bar{\mu}_\s$ denotes the global Maxwellian,
\[
\bar{\mu}_\s(v) = (2\pi\sigma)^{-\frac d2}
 e^{-\frac{|v|^2}{2\sigma}}.
\]
It then follows from \eqref{eq:free-energy-estimated} that
\begin{equation}\label{e:Ibound}
 \int_0^t\cI(f_\e(s))\,\dd s \leq \e \Big(C_T + \frac{\sigma d}{2}|\log(2\pi\sigma)|\Big) = C\e.
\end{equation}

Finally, for the second moment, we compute
\begin{align*}
 \cE(f_\e(t)) & = \frac12 \cM_2(f_\e(t)) + \cH(f_\e| \bar{\mu}_{2\sigma}) + \sigma\int_{\Omega\times\R^d} f_\e\log \bar{\mu}_{2\sigma} \,\dx\,\dv\\
 & \ge \frac14 \cM_2(f_\e(t)) - \frac{\sigma d}{2}\log(4\pi\sigma).
\end{align*}
It implies a uniform bound:
\begin{equation}\label{e:M2}
 \cM_2(f_\e(t)) \leq 4C_T + 2\sigma d |\log(4\pi\sigma)|.
\end{equation}
\end{proof}

\subsection{Macroscopic balance laws}
\label{sec:moment-equations}

We next compute the macroscopic balance laws generated by \eqref{eq:intro-kinetic}.
The calculation is formal for smooth solutions and holds in the sense of distributions
for admissible weak solutions by testing \eqref{eq:weaksol} against functions that are
constant or linear in $v$, after the standard velocity cutoff approximation.  The
presentation follows the usual moment calculation for kinetic flocking models, with
the additional feature that the local equilibrium is Maxwellian rather than monokinetic;
compare with \cite{black2025hydrodynamic}.

Taking the zeroth velocity moment gives
\begin{equation}\label{eq:continuity-eps}
  \partial_t\rho_\e+\grad_x\cdot(\rho_\e u_\e)=0.
\end{equation}
Indeed, all velocity-divergence terms vanish after integration in $v$.

We now take the first velocity moment.  Multiplying \eqref{eq:intro-kinetic} by
$v$ and integrating in $v$, the transport term gives
\begin{equation}\label{eq:transport-momentum-detail}
  \int_{\R^d}v\,v\cdot\grad_x f_\e\,\dv
  =
  \grad_x\cdot\int_{\R^d}v\otimes v\,f_\e\,\dv.
\end{equation}
The local Fokker--Planck part has zero first moment:
\begin{equation}\label{eq:FP-momentum-zero-detail}
\int_{\R^d}
 v\,\grad_v\cdot\bigl((v-u_\e)f_\e+\sigma\grad_v f_\e\bigr)\,\dv =
 -\int_{\R^d}(v-u_\e)f_\e\,\dv
 -\sigma\int_{\R^d}\grad_v f_\e\,\dv
 =0.
\end{equation}
The first integral vanishes by the definition of $u_\e$, and the second one
vanishes by integration by parts in velocity.

The alignment contribution is
\begin{equation}\label{eq:B-Phi-f}
  \cB_\Phi[f_\e](x)
  =
  \iiint_{\Omega \times \R^{2d}}
  \phi(x-x')\Phi(v'-v) f_\e f_\e'\,\dx'\,\dv\,\dv'.
\end{equation}
Thus
\begin{equation}\label{eq:moment-raw}
  \partial_t(\rho_\e u_\e)
  +\grad_x\cdot\int_{\R^d}v\otimes v\,f_\e\,\dv
  =
  \cB_\Phi[f_\e].
\end{equation}

We decompose the second velocity moment into its macroscopic and microscopic parts:
\begin{equation}\label{eq:second-moment-split-detail}
  \int_{\R^d}v\otimes v\,f_\e\,\dv
  =
  \rho_\e u_\e\otimes u_\e+\s\rho_\e\I_d+\mathcal R_\e,
\end{equation}
where the Reynolds stress tensor is defined by
\begin{equation}\label{eq:R-eps}
  \mathcal R_\e(x) = \int_{\R^d}(v-u_\e)\otimes(v-u_\e)f_\e\,\dv-\sigma\rho_\e\I_d.
\end{equation}
Therefore \eqref{eq:moment-raw} becomes
\begin{equation}\label{eq:moment-with-pressure}
  \partial_t(\rho_\e u_\e)
  +\grad_x\cdot(\rho_\e u_\e\otimes u_\e)
  +\sigma\grad_x\rho_\e
  =
  \cB_\Phi[f_\e]-\grad_x\cdot \cR_\e .
\end{equation}
The tensor $\cR_\e$ is the stress defect.  It measures the deviation of the
actual kinetic covariance from the Maxwellian covariance $\sigma\rho_\e\I_d$.
In the isothermal setting, the thermal part $\sigma\rho_\e\I_d$ produces the
pressure, while the remaining defect $\cR_\e$ must be controlled.

\subsection{Maxwellian closure and nonlinear discrepancy}
\label{sec:maxwellian-closure}

We now identify the closure of the alignment force on local Maxwellians. The Maxwellian force moment is
\begin{align}\label{eq:B-g-first-line}
\cB_\Phi[\mu_\e](x) &=
\iiint_{\Omega \times \R^{2d}} \phi(x-x')\Phi(v'-v) \mu_\e\mu_\e'\,\dx'\,\dv\,\dv'.
\end{align}
Shifting the variables $v \to v + u_\e(x)$ and $v' \to v' + u_\e(x')$, and computing the convolution of two Gaussians,  we obtain
\[
\cB_\Phi[\mu_\e](x)=
\rho_\e\int_\Omega\phi(x-x')\rho_\e'
\int_{\R^d}
\Phi\bigl(u_\e'-u_\e+a\bigr)
 \bar{\mu}_{\s} \ast \bar{\mu}_{\s}(a)\,\da\,\dx'.
\] 
Let us recall that $\bar{\mu}_{\s} \ast \bar{\mu}_{\s} = \bar{\mu}_{2\s}$, so
\[
\cB_\Phi[\mu_\e](x) = 
\rho_\e \int_\Omega\phi(x-x') \Psi(u_\e'-u_\e) \rho_\e'\,\dx' =
  \rho_\e A_{\Psi}[\rho_\e,u_\e],
\]
where $\Psi$ is defined in \eqref{eq:intro-Psi}:
\[
 \Psi(z)=\int_{\R^d}\Phi(z- \zeta) \bar{\mu}_{2\s}(\zeta)\,\dzeta.
\]

We define the nonlinear Maxwellian discrepancy by
\begin{equation}\label{eq:G-def}
  \cG_\e(x)=\cG_\Phi[f_\e](x):=\cB_\Phi[f_\e](x)-\cB_\Phi[\mu_\e](x).
\end{equation}
This discrepancy measures the failure of the nonlinear alignment force to close on the
first two velocity moments.  The hydrodynamic limit therefore requires a quantitative
estimate showing that $\cG_\e$ is small when $f_\e$ is close to the local
Maxwellian $\mu_\e$.

\begin{remark}\label{rem:discrepancy}
We emphasize that if $\Phi(z)=Lz$ is linear, then $\cG_\Phi\equiv0$.  Indeed, for
each pair $(x,y)$,
\[
 \iint_{\R^{2d}}L(w-v)f_\e(x,v)f_\e(y,w)\,\dv\,\dw
 =
 \rho_\e(x)\rho_\e(y)L\bigl(u_\e(y)-u_\e(x)\bigr),
\]
and the same identity holds with $\mu_\e(x,v)\mu_\e(y,w)$, since $f_\e$ and
$\mu_\e$ have the same local mass and first moment.  Thus the discrepancy is a
purely nonlinear effect.
\end{remark}

It is also useful to compare this isothermal closure with the pressureless case.
Formally, as $\sigma\to0$, the Gaussian $ \bar{\mu}_{2\s}$ converges to the Dirac
mass at the origin, and hence $\Psi \to\Phi$.  In that singular limit, the
macroscopic nonlinearity matches the kinetic nonlinearity, and the discrepancy
$\cG_\e$ reduces to the one considered in \cite{black2025hydrodynamic}.  For
fixed $\sigma>0$, however, the thermal fluctuations persist and are averaged into
the modified law $\Psi$.

Combining \eqref{eq:moment-with-pressure} and \eqref{eq:G-def}, we obtain the
closed approximate macroscopic system with two defects:
\begin{equation}\label{eq:eps-system-closed}
  \partial_t(\rho_\e u_\e)
  +\grad_x\cdot(\rho_\e u_\e\otimes u_\e)
  +\sigma\grad_x\rho_\e
  =
  \rho_\e A_{\Psi}[\rho_\e,u_\e]
  + \cG_\Phi[f_\e] 
  -\grad_x\cdot \cR_\e .
\end{equation}

\section{Dynamics of the relative entropy}\label{sec:macro-entropy}

We will now analyze the equation for the relative entropy \eqref{e:relentr1} by combining the free energy equation \eqref{e:free} with the equation for the macroscopic correction $M_\e$, which we develop next. 

Recall that the full  macroscopic system obtained in \eqref{eq:continuity-eps}
and \eqref{eq:eps-system-closed} is given by
\begin{equation}\label{eq:eps-macro-sec4}
\begin{cases}
 \,\partial_t\rho_\e+\grad_x\cdot(\rho_\e u_\e)=0,\\[1mm]
 \,\partial_t(\rho_\e u_\e)
 +\grad_x\cdot(\rho_\e u_\e\otimes u_\e)
 +\sigma \grad_x\rho_\e
 =
 \rho_\e A_{\Psi}[\rho_\e,u_\e]
 + \cG_\Phi[f_\e]  - \grad_x\cdot \cR_\e .
\end{cases}
\end{equation}
The limiting system is
\begin{equation}\label{eq:limit-sec4}
\begin{cases}
 \,\partial_t\rho+\grad_x\cdot(\rho u)=0,\\[1mm]
 \,\partial_t(\rho u)
 +\grad_x\cdot(\rho u\otimes u)
 +\sigma\grad_x\rho
 =
 \rho A_{\Psi}[\rho,u].
\end{cases}
\end{equation}
Since $\rho>0$ and the limiting solution is smooth, we also use the
nonconservative form
\begin{equation}\label{eq:limit-nonconservative-sec4}
  (\partial_t+u\cdot\grad_x)u
  =
  A_{\Psi}[\rho,u]
  -\sigma\grad_x\log\rho .
\end{equation}

Let us compute the derivative of each component of $M_\e$. By the Reynolds Transport Theorem,
\[
\begin{split}
\ddt \frac12 \int_{\O} \rho_\e |u|^2 \dx & =\frac12 \int_{\O} \rho_\e (\p_t + u_\e \cdot \n_x) |u|^2 \dx \\
& = \int_{\O} \rho_\e \p_t u \cdot u\,\dx + \int_{\O} \rho_\e u_\e \cdot \n_x u \cdot u \dx \\
& = \int_\O \big( \rho_\e (u_\e - u) \cdot \n u \cdot u - \s \rho_\e  u \cdot \n \log \rho  + \rho_\e u \cdot A_{\Psi}[\rho,u]  \big) \dx \\
\end{split}
\]
Similarly, using the macroscopic systems,
\[
\begin{split}
 - \ddt \int_{\O} \rho_\e u_\e \cdot u  \dx& = \int_\O \big( -\rho_\e (u_\e - u) \cdot \n u \cdot u_\e - \s \rho_\e \n \cdot u + \s \rho_\e u_\e \cdot \n \log \rho - \n u: \cR_\e \\
& - \rho_\e u \cdot A_{\Psi}[\rho_\e,u_\e] - \rho_\e u_\e \cdot A_{\Psi}[\rho,u] -  \cG_\Phi[f_\e]  \cdot u \big) \dx, \\
- \s \ddt \int_{\O} \rho_\e \log \rho \dx &= \int_\O \big( - \s \rho_\e u_\e \cdot \n \log \rho + \s \rho_\e  u \cdot \n \log \rho +\s \rho_\e \n \cdot u \big) \dx.
\end{split}
\]
Adding these up, we can see that the pressure terms cancel out. The rest adds up to 
\begin{equation}\label{e:Me-main}
\begin{split}
 \ddt M_\e & =\int_\O\big(-\n u:\cR_\e  -  \rho_\e (u_\e - u) \cdot \n u \cdot (u_\e - u)  \big) \dx \\
 &+ \int_\O \big(   \rho_\e u \cdot (A_{\Psi}[\rho,u] - A_{\Psi}[\rho_\e,u_\e] )  - \rho_\e u_\e \cdot A_{\Psi}[\rho,u]  -  \cG_\Phi[f_\e]  \cdot u \big)  \dx.
 \end{split}
\end{equation}

Let us look at the first integral. It consists of the Reynolds term and the inertial term. The inertial term, due to the assumed regularity $\n u \in L^\infty$, is bounded by the macroscopic relative entropy $\cH(\mu_\e| \mu)$, which in turn is bounded by the main relative entropy:
\[
\left|\int_\O\rho_\e(u_\e-u)\cdot\n u\cdot(u_\e-u)\,\dx\right|
\leq 2\|\n u\|_{L^\infty}\cH(\mu_\e|\mu)
\leq C\cH(f_\e|\mu).
\]
The Reynolds term is bounded by the $L^1$-norm of the stress itself, which we estimate using the classical argument, see \cite[Lemma 4.8]{karper2015hydrodynamic}: observe the identity
\[
\cR_\e = \int_{\R^d}  [ 2 \s \n_v \sqrt{f_\e} + (v -u_\e) \sqrt{f_\e}] \otimes [ (v -u_\e) \sqrt{ f_\e}] \dv.
\]
By the \HI,
\[
 \int_\Omega |\cR_\e| \dx \leq \cI(f_\e)^{1/2} \sqrt{ \iint_{\O \times \R^d} |v -u_\e|^2 f_\e \dv \dx}
 \]
The internal energy that appears under the root is bounded by $\cM_2$, which by \eqref{e:M2} is uniformly bounded. We proved the bound
\[
\left| \int_\O \n u: \cR_\e \dx \right| \leq C  \cI(f_\e)^{1/2}.
\]

Now, let us examine the alignment term in \eqref{e:Me-main}: we have
\begin{align*}
& \int_\O \big(   \rho_\e u \cdot (A_{\Psi}[\rho,u] - A_{\Psi}[\rho_\e,u_\e] )  - \rho_\e u_\e \cdot A_{\Psi}[\rho,u]  -  \cG_\Phi[f_\e]  \cdot u \big)  \dx \\
 = &  \int_\O \big(   \rho_\e u \cdot (A_{\Psi}[\rho,u] - A_{\Psi}[\rho_\e,u_\e] )  - \rho_\e u_\e \cdot A_{\Psi}[\rho,u]  -  \cG_\Phi[f_\e]  \cdot (u - u_\e)  - \cG_\Phi[f_\e]  \cdot u_\e \big)  \dx \\
 = &  \int_\O \big(   \rho_\e (u_\e-u) \cdot (A_{\Psi}[\rho_\e,u_\e] - A_{\Psi}[\rho,u] ) -  \cG_\Phi[f_\e]  \cdot (u - u_\e)  - \cB_\Phi[f_\e] \cdot u_\e \big)  \dx .
\end{align*}
For the macroscopic alignment term we have
\begin{align}\label{eq:alignment-split-sec4}
 &\int_\Omega\rho_\e (u_\e-u)\cdot
 \Bigl(
 A_{\Psi}[\rho_\e,u_\e]
 -A_{\Psi}[\rho,u]
 \Bigr)\,\dx
 =: -A_1+ A_2,
\end{align}
where
\begin{align*}
 A_1 &=
 -\int_\Omega\rho_\e (u_\e-u) \cdot \Bigl( A_{\Psi}[\rho_\e,u_\e] -A_{\Psi}[\rho_\e,u] \Bigr)\,\dx,\\
 A_2 &=
 \int_\Omega\rho_\e (u_\e-u)\cdot \Bigl( A_{\Psi}[\rho_\e,u] -A_{\Psi}[\rho,u] \Bigr)\,\dx .
\end{align*}
By symmetry of $\phi$ and oddness of $\Psi$, we can express $A_1$ by
\begin{align}\label{eq:A1-sym-sec4}
 & A_1 = \iint_{\Omega^2} \phi(x-x') A_{1,\e} (x,x')\,\dx\,\dx',\quad\text{where}\\
 & A_{1,\e} (x,x')  = 
 \frac12 \rho_\e\rho'_\e \Big(\bigl(u_\e'-u_\e\bigr)-\bigl(u'-u\bigr)\Big)\cdot \Big( \Psi\bigl(u'_\e-u_\e\bigr) - \Psi\bigl(u'-u\bigr) \Big) \geq 0.\notag
\end{align}
To estimate $A_2$, we first notice that the kernel $\Psi$ has the same polynomial bound as $\Phi$ due to convolution with the Gaussian:
\[
  |\Psi(z)| \leq C (1 + |z|^{p-1}).
\]
Since $u\in L^\infty$,
\[
  |u(y)-u(x)|\le2\|u\|_{L^\infty},
\]
and hence
\[
  \sup_{t\in[0,T]}\sup_{x,y\in\Omega} \left| \Psi\bigl(u(t,y)-u(t,x)\bigr) \right| \le C .
\]
It follows that
\begin{equation}\label{eq:density-force-bound-sec4}
 \left\| A_{\Psi}[\rho_\e,u] -A_{\Psi}[\rho,u] \right\|_{L^\infty}
 \le \|\phi\|_{L^\infty} \sup_{x,y} \left|\Psi\bigl(u(y)-u(x)\bigr)\right|  \|\rho_\e-\rho\|_{L^1} \le C\|\rho_\e-\rho\|_{L^1},
\end{equation}
which by \CK, is bounded by $\cH^{1/2}(\mu_\e| \mu)$. 
Hence,
\begin{align}\label{eq:A2-bound-sec4}
 |A_2| &\le
 C\left(\int_\Omega\rho_\e|u_\e - u|^2\,\dx\right)^{1/2} \|\rho_\e-\rho\|_{L^1}
 \le C\cH(\mu_\e| \mu) \leq C \cH(f_\e | \mu).
\end{align}

Let us summarize the obtained differential inequality for the macroscopic energy $M_\e$:
\begin{equation}\label{e:Me-2}
\ddt M_\e \leq c_1 \cH(f_\e|\mu) + c_2 \sqrt{ \cI(f_\e)} - A_1 +\int_\O \big(   \cG_\Phi[f_\e] \cdot (u_\e - u)  - \cB_\Phi[f_\e] \cdot u_\e \big)  \dx.
\end{equation}
Combining \eqref{e:Me-2} with the free energy equation \eqref{e:free} we obtain
\begin{equation}\label{eq:Hfinal}
\ddt \cH(f_\e|\mu) + \frac{1}{\e}\cI(f_\e) \leq C \cH(f_\e|\mu) + C \sqrt{ \cI(f_\e)} - A_1 + \Rem_\e,
\end{equation}
where the remainder term is
\begin{equation}\label{eq:rem}
 \Rem_\e = -\cD_\Phi(f_\e) + \sigma\cQ_\Phi(f_\e) +\int_\O \big(   \cG_\Phi[f_\e] \cdot (u_\e - u)  - \cB_\Phi[f_\e] \cdot u_\e \big)  \dx.
\end{equation}

The rest of the proof reduces to estimation of the remainder term $\Rem_\e$.

\section{Remainder term control}\label{sec:remainder}
In this section, we focus on the control of the remainder term $\Rem_\e$.

By symmetry of $\phi$ and oddness of $\Phi$, together with the Maxwellian cancellation proved below, we express the remainder term by:
\begin{equation}\label{eq:kernel-representation}
\begin{split}
\Rem_\e &= \iint_{\Omega^2}\phi(x-x')\,\Rem_\e(x,x')\,\dx\,\dx',\\
\Rem_\e(x,x') &= \iint_{\R^{2d}} K_\Phi \bigl(x,x',v'-v\bigr)
  \bigl(f_\e f_\e'-\mu_\e\mu_\e' \bigr)\,\dv\,\dv',\\
  K_\Phi(x,x',w) &= -\frac12\bigl(w+u-u'\bigr)\cdot\Phi(w) +\s\,\grad\cdot\Phi(w).
\end{split}
\end{equation}
Define 
\begin{equation}\label{eq:delta}
  U_\e(x,x') :=(u_\e'-u_\e)-(u'-u).
\end{equation}
Using \eqref{eq:delta}, we can further decompose the kernel by
\begin{equation}\label{eq:K-decomp}
  K_\Phi(x,x',w) = K_{0,\e}(x,x',w)+K_{1,\e}(x,x',w),
\end{equation}
with
\begin{align}
  K_{0,\e}(x,x',w) 
  &:=-\frac12 \bigl(w+u_\e-u_\e'\bigr)\cdot\Phi(w) +\s\,\grad\cdot\Phi(w),  \label{eq:K0}\\
  K_{1,\e}(x,x',w)
  &:=-\frac12U_\e(x,x') \cdot\Phi(w). \label{eq:K1}
\end{align}
For the $\mu_\e\mu_\e'$-part, the $K_{0,\e}$ term vanishes against local Maxwellian, and the $K_{1,\e}$ term can be combined with $A_{1,\e}$.
\begin{lemma}\label{lem:K0K1}
The following identities hold:
\begin{equation}\label{eq:K0-Maxwellian}
  \iint_{\R^{2d}}
  K_{0,\e}(x,x',v'-v)
  \mu_\e\mu_\e'\,\dv\,\dv'
  =0,\qquad \forall\,(x,x')\in\Omega^2,
\end{equation}	
\begin{equation}\label{eq:K1-Maxwellian}
  -\iint_{\R^{2d}}K_{1,\e}(x,x',v'-v)
  \mu_\e\mu_\e'\,\dv\,\dv'-A_{1,\e}(x,x')
  =\frac12\rho_\e\rho_\e'\,U_\e(x,x')\cdot\Psi(u'-u).
\end{equation}	
\end{lemma}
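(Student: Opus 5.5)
The plan is to reduce both identities to the Gaussian structure of the relative velocity under $\mu_\e\mu_\e'$. With the shifts $v\to v+u_\e$ and $v'\to v'+u_\e'$, and since $\bar\mu_\s*\bar\mu_\s=\bar\mu_{2\s}$, the variable $w=v'-v$ under $\mu_\e\mu_\e'$ is distributed as $\rho_\e\rho_\e'\,\bar\mu_{2\s}(w-z)\,\dd w$, where $z:=u_\e'-u_\e$. Hence for any kernel $g(w)$ of polynomial growth,
\[
\iint_{\R^{2d}} g(v'-v)\mu_\e\mu_\e'\,\dv\,\dv'=\rho_\e\rho_\e'\int_{\R^d} g(w)\bar\mu_{2\s}(w-z)\,\dw .
\]
The growth bounds \eqref{H1-growth} make all integrals absolutely convergent. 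On the vacuum set $\rho_\e\rho_\e'=0$ both sides of each identity vanish trivially, so we may assume $\rho_\e\rho_\e'>0$.

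For \eqref{eq:K0-Maxwellian}, note that $K_{0,\e}(x,x',w)=-\frac12(w-z)\cdot\Phi(w)+\s\,\grad\cdot\Phi(w)$. We use the Gaussian identity $\grad_w\bar\mu_{2\s}(w-z)=-\frac{w-z}{2\s}\bar\mu_{2\s}(w-z)$ and integrate by parts in $w$; the boundary terms vanish because $\Phi$ grows polynomially while the Gaussian decays. This gives
\[
\s\int_{\R^d}\grad\cdot\Phi(w)\,\bar\mu_{2\s}(w-z)\,\dw=-\s\int_{\R^d}\Phi(w)\cdot\grad_w\bar\mu_{2\s}(w-z)\,\dw=\frac12\int_{\R^d}(w-z)\cdot\Phi(w)\,\bar\mu_{2\s}(w-z)\,\dw,
\]
which cancels the first term exactly. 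This Stein-type identity is the only genuine step. The only point needing care is that the coefficient $\frac12$ in $K_0$ matches the variance $2\s$ of the convolved Gaussian, and it does.

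For \eqref{eq:K1-Maxwellian}, $K_{1,\e}=-\frac12U_\e\cdot\Phi(w)$ with $U_\e$ independent of $w$. The integral is therefore $-\frac12\rho_\e\rho_\e'\,U_\e\cdot\int\Phi(w)\bar\mu_{2\s}(w-z)\,\dw$. Substituting $w=z-\zeta$ and using that $\bar\mu_{2\s}$ is even, this equals $-\frac12\rho_\e\rho_\e'\,U_\e\cdot\Psi(z)$ by \eqref{eq:intro-Psi}. So the left side of \eqref{eq:K1-Maxwellian} is
\[
\frac12\rho_\e\rho_\e'U_\e\cdot\Psi(u_\e'-u_\e)-A_{1,\e}.
\]
From \eqref{eq:A1-sym-sec4}, $A_{1,\e}=\frac12\rho_\e\rho_\e'U_\e\cdot\bigl(\Psi(u_\e'-u_\e)-\Psi(u'-u)\bigr)$. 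Subtracting leaves $\frac12\rho_\e\rho_\e'U_\e\cdot\Psi(u'-u)$, as claimed.

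I do not expect a real obstacle here. The one thing to verify is the sign convention $w=v'-v$, so that the mean of $w$ is $u_\e'-u_\e$ and not its negative. This is consistent with the definition of $U_\e$ in \eqref{eq:delta} and with the argument $\Psi(u_\e'-u_\e)$ appearing in $A_{1,\e}$.
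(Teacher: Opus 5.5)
Your proposal is correct and matches the paper's proof. You reduce both identities to integrals against $\bar\mu_{2\s}$, cancel the two terms of $K_{0,\e}$ by Gaussian integration by parts, and evaluate the $K_{1,\e}$ integral as $\Psi(u_\e'-u_\e)$ before subtracting $A_{1,\e}$; the only difference is that you keep the Gaussian centered at $u_\e'-u_\e$ rather than shifting the argument of $\Phi$, which is purely cosmetic.
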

\begin{proof}
For \eqref{eq:K0-Maxwellian}, we compute
\begin{align}
 &\iint_{\R^{2d}}
 K_{0,\e}(x,x',v'-v)
 \mu_\e\mu_\e'\,\dv\,\dv'
 \notag\\
 &\qquad=
 \rho_\e\rho_\e'
 \int_{\R^d}
 \left[
   -\frac12 w\cdot
   \Phi(u_\e'-u_\e+w)
   +\s\,\grad\cdot
   \Phi(u_\e'-u_\e+w)
 \right]
 \bar\mu_{2\s}(w)\,\dw.
 \label{eq:general-K0-Maxwellian}
\end{align}
Since
\[
  \grad_w \bar\mu_{2\s}(w)
  =-\frac{w}{2\s}\bar\mu_{2\s}(w),
\]
applying integration by parts yields
\begin{align*}
  \int_{\R^d}
  w \cdot\Phi(u_\e'-u_\e+w)
  \bar\mu_{2\s}(w)\,\dw
  &=2\s
  \int_{\R^d}
  \grad\cdot\Phi(u_\e'-u_\e+w)
  \bar\mu_{2\s}(w)\,\dw.
\end{align*}
Therefore the two terms in \eqref{eq:general-K0-Maxwellian} cancel exactly, and \eqref{eq:K0-Maxwellian} holds.

For \eqref{eq:K1-Maxwellian}, we compute
\begin{align}
 &\iint_{\R^{2d}}
 K_{1,\e}(x,x',v'-v)
 \mu_\e\mu_\e'\,\dv\,\mathrm{d}v'
 \notag\\
 &\qquad=
 -\frac12\rho_\e\rho_\e'\,U_\e(x,x') \cdot \int_{\R^d} \Phi(u_\e'-u_\e+w)\bar\mu_{2\s}(w)\,\dw =
 -\frac12\rho_\e\rho_\e'\,
 U_\e(x,x') \cdot\Psi(u_\e'-u_\e).
 \label{eq:general-K1-Maxwellian-new}
\end{align}
Recalling 
\[  A_{1,\e}(x,x')
  =\frac12\,\rho_\e\rho_\e'\,
  U_\e(x,x') \cdot
  \Bigl(
    \Psi(u_\e'-u_\e)-\Psi(u'-u)
  \Bigr).
\]
Putting the two terms together yields \eqref{eq:K1-Maxwellian}.
\end{proof}
For the $f_\e f_\e'$-part, we have
\begin{align}\label{eq:K-f}
 & \iint_{\R^{2d}}	K_\Phi(x,x',v'-v)f_\e f_\e'\,\dv\,\dv'\notag\\
 = & -\frac12\iint_{\R^{2d}}\Bigl((v'-v)-(u'-u)\Bigr) \cdot\Bigl(\Phi(v'-v)-\Phi(u'-u)\Bigr)f_\e f_\e'\,\dv\,\dv'\notag\\
 &
 -\frac12\rho_\e(x)\rho_\e(x')\,U_\e(x,x') \cdot\Phi(u'-u)
 +\s\iint_{\R^{2d}}\grad\cdot\Phi(v'-v)f_\e f_\e'\,\dv\,\dv',
\end{align}
where we have added and subtracted a term
\[
 \frac12\iint_{\R^{2d}}
 \bigl((v'-v)-(u'-u)\bigr)\cdot\Phi(u'-u) f_\e f_\e'\,\dv\,\dv' = \frac12\rho_\e\rho_\e'\,U_\e(x,x') \cdot \Phi(u'-u).
\]

Combining \eqref{eq:K0-Maxwellian}, \eqref{eq:K1-Maxwellian} and \eqref{eq:K-f}, we obtain the following representation:
\begin{align}
 \Rem_\e(x,x') - A_{1,\e}(x,x') = &
 -\frac12\iint_{\R^{2d}}\Bigl((v'-v)-(u'-u)\Bigr)\cdot\Bigl(\Phi(v'-v)-\Phi(u'-u)\Bigr)f_\e f_\e'\,\dv\,\dv'
 \notag\\
 &+\s\iint_{\R^{2d}}\grad\cdot\Phi(v'-v)f_\e f_\e'\,\dv\,\dv'
 +\frac12\rho_\e\rho_\e'\,U_\e(x,x') \cdot\Bigl(\Psi(u'-u)-\Phi(u'-u)\Bigr).
 \label{eq:Rem2}
\end{align}

The first term in \eqref{eq:Rem2} is coercive. Indeed, from \eqref{H2-regularity} we know there is a constant $B>0$ such that
\begin{equation}\label{eq:b-bounded-general}
  |u'-u|\le B \qquad \forall~ (t,x,x')\in[0,T]\times\Omega^2.
\end{equation}
Then, by \eqref{H1-coercivity}, we have
\begin{equation}\label{eq:general-coercive-integrand}
  \Bigl((v'-v)-(u'-u)\Bigr)\cdot\Bigl(\Phi(v'-v)-\Phi(u'-u)\Bigr)
  \ge c_B\Bigl|(v'-v)-(u'-u)\Bigr|^p-C_B.
\end{equation}
Jensen's inequality further implies
\begin{equation}\label{eq:general-Jensen-delta}
  \iint_{\R^{2d}}\Bigl|(v'-v)-(u'-u)\Bigr|^p f_\e f_\e'\,\dv\,\mathrm{d}v' 
  \ge \rho_\e\rho_\e'|U_\e(x,x')|^p.
\end{equation}

For the second term in \eqref{eq:Rem2}, we apply \eqref{H1-growth} and get
\begin{equation}\label{eq:general-div-absorb}
  |\grad\cdot\Phi(v'-v)| \le C(1+|v'-v|^{p-2})
  \le \eta \Bigl|(v'-v)-(u'-u)\Bigr|^p+C_{\eta,B},
\end{equation}
for every $\eta>0$.
Choosing $\eta$ sufficiently small, the production term can be absorbed into a
fixed fraction of the first term in
\eqref{eq:Rem2}.  

Finally, for the last term in \eqref{eq:Rem2}, we have
\[
  \left|U_\e \cdot \Bigl(\Psi(u'-u)-\Phi(u'-u)\Bigr) \right|\le C_B|U_\e|.
\]

Therefore, we obtain the bound
\begin{equation}\label{eq:general-large-delta-final}
\Rem_\e(x,x')-A_{1,\e}(x,x')\le\rho_\e\rho_\e'
 \Bigl(-c|U_\e|^p+C+C|U_\e|\Bigr).
\end{equation}
Thus there exists $S>0$, depending only on the structural constants and the limiting solution bounds, such that the pairwise contribution in \eqref{eq:Rem2} is non-positive whenever
\begin{equation}\label{eq:general-large-delta-set}
  |U_\e(x,x')|\ge S.
\end{equation}
Consequently, we have
\begin{equation}\label{e:RAS}
\Rem_\e(x,x') - A_{1,\e}(x,x') \leq \Rem_\e(x,x')\one_{E_S}(x,x'), \quad E_S=\{|U_\e(x,x')|<S\}.
\end{equation}

We now fix a point $(x,x') \in E_S$ and focus on estimating $\Rem_\e(x,x')$ at that particular point  based on the known bound $|U_\e(x,x')| <S$. To do that we introduce a further splitting of the integral $\iint_{\R^{2d}}$ into regions $|v'-v|<R$ and $|v'-v|\geq R$ in a smooth fashion. So, let $R\ge 1$ be fixed. We pick any  radial and even cut-off function $\chi\in C_c^\infty(\R^d)$, with $0\le\chi\le1$, $\chi=1$ on $|z|\le1$, and $\chi=0$ on $|z|\ge2$. Set $\chi_R(z)=\chi(z/R)$.

Consider 
\[
\Rem_{\e}^{>R}(x,x') = \iint_{\R^{2d}} (1-\chi_R(v'-v)) K_\Phi \bigl(x,x',v'-v\bigr)
  \bigl(f_\e f_\e'-\mu_\e\mu_\e' \bigr)\,\dv\,\dv'.
\]
From the coercivity condition \eqref{H1-coercivity} on $\Phi$, together with the bound \eqref{eq:b-bounded-general} on $|u-u'|$, we have 
\begin{equation}\label{eq:K-coercive-outer}
 K_\Phi(x,x',w)
 \le
 -c|w+u-u'|^p+C_B.
\end{equation}
Hence, for $R$ sufficiently large, $K_\Phi(x,x',z)\le0$ whenever
$|z|\ge R$.  Consequently, the $f_\e f_\e'$ contribution to
$\Rem_{\e}^{>R}(x,x')$ is non-positive and can be discarded:
\begin{equation*}
\begin{split}
\Rem_{\e}^{>R}(x,x') & \leq - \iint_{\R^{2d}} (1-\chi_R(v'-v)) K_\Phi \bigl(x,x',v'-v\bigr)
  \mu_\e\mu_\e' \,\dv\,\dv' \\
   &=-\rho_\e\rho_\e'\int_{\R^d}
   (1-\chi_R(u_\e'-u_\e+w))K_\Phi(x,x',u_\e'-u_\e+w)\bar\mu_{2\s}(w)\,\dw\\
   &\le\rho_\e\rho_\e'\int_{|u_\e'-u_\e+w|>R}
   |K_\Phi(x,x',u_\e'-u_\e+w)|\bar\mu_{2\s}(w)\,\dw.
\end{split}
\end{equation*}
Recall that on $E_S$, we have
\[
 |u_\e'-u_\e|
 \le |u'-u|+|U_\e(x,x')|
 \le B+S.
\]
Then, by taking $R\geq 2(B+S)$, we obtain
\[
\bigl\{w: |u_\e'-u_\e+w|>R\bigr\} \subset \bigl\{w: |w|>\tfrac{R}{2}\bigr\},
\]
and by the polynomial growth assumption \eqref{H1-growth}, 
\[
 |K_\Phi(x,x',u_\e'-u_\e+w)|
 \le C(1+|w|^p).
\]
Thus, the long range remainder is estimated by
\begin{equation}\label{e:Rout}
\Rem_{\e}^{>R}(x,x') \leq C \rho_\e\rho_\e'\int_{|w|>R/2}(1+|w|^p) \bar\mu_{2\s}(w)\,\dw \lesssim e^{-cR^2} \rho_\e\rho_\e' ,
\end{equation}
for some small $c>0$.

Now let us turn to the short-range remainder
\[
\Rem_{\e}^{ < R}(x,x') = \iint_{\R^{2d}} \chi_R(v'-v) K_\Phi \bigl(x,x',v'-v\bigr)
  \bigl(f_\e f_\e'-\mu_\e\mu_\e' \bigr)\,\dv\,\dv'.
\]
Let us notice that we can incorporate the cut-off into the kernel $\Phi$ by cost of having an extra remainder term:
\begin{equation}\label{e:Rem<R}
\Rem_{\e}^{ < R}(x,x') =  \iint_{\R^{2d}}  K_{\chi_R \Phi} \bigl(x,x',v'-v\bigr)
  \bigl(f_\e f_\e'-\mu_\e\mu_\e' \bigr)\,\dv\,\dv' +  \cC_{\e,R}(x,x'),
\end{equation}
where
\[
 \cC_{\e,R}(x,x') =
 -\s
 \iint_{\R^{2d}}
 \grad\chi_R(v'-v)\cdot\Phi(v'-v)
 (f_\e f_\e'-\mu_\e\mu_\e')
 \dv\,\dv'.
\]
To estimate this remainder term $\cC_{\e,R}(x,x')$, we treat it as a generic discrepancy functional with kernel $F(w)=\grad\chi_R(w)\cdot\Phi(w)$ and use the local Fisher density $\cI_x(f_\e)$:
\[
  \cI_x(f_\e):=\int_{\R^d}\frac{|\iota_\e(x,v) |^2}{f_\e(x,v)}\,\dv, \qquad \iota_\e (x,v)= \s \n_v f_\e + (v - u_\e) f_\e.
\]

\begin{lemma}\label{lem:transport-Fisher}
Let $F:\R^d\to\R^k$ be a Lipschitz function. Then
\begin{equation}\label{eq:transport-fisher}
 \left|\iint_{\R^{2d}}F(v'-v)\bigl(f_\e f_\e'-\mu_\e\mu_\e'\bigr)\,\dv\,\dv'\right|
 \le
 C\,\|\n F\|_\infty \,\Bigl(\rho_\e\sqrt{\rho_\e'\cI_{x'}(f_\e)} +\rho_\e'\sqrt{\rho_\e\cI_{x}(f_\e)}\Bigr).
\end{equation}
\end{lemma}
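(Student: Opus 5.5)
We split the product difference telescopically,
\[
f_\e f_\e'-\mu_\e\mu_\e' = (f_\e-\mu_\e)f_\e' + \mu_\e(f_\e'-\mu_\e'),
\]
and treat the two pieces symmetrically; we describe only the second one, which will produce the term $\rho_\e\sqrt{\rho_\e'\cI_{x'}(f_\e)}$.

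\emph{Reduction to a one-point estimate.} For the second piece we freeze $v$ and set
\[
G(v') := \int_{\R^d} F(v'-v)\,\mu_\e(x,v)\,\dv .
\]
This function is Lipschitz in $v'$ with $\|\n G\|_\infty\le \rho_\e\|\n F\|_\infty$. It therefore suffices to prove the one-point bound
\[
\Bigl|\int_{\R^d} G(v')\bigl(f_\e'-\mu_\e'\bigr)\dv'\Bigr| \le C\,\|\n G\|_\infty\sqrt{\rho_\e'\cI_{x'}(f_\e)} .
\]
Because $f_\e'$ and $\mu_\e'$ have equal mass and momentum, we may replace $G$ by $G-G(u_\e')$ with no change.

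\emph{Transport--Fisher estimate.} We represent the difference $f_\e'-\mu_\e'$ via the Ornstein--Uhlenbeck semigroup $e^{sL}$ in $v'$, where
\[
L h = \n_{v'}\cdot\bigl(\s\n_{v'}h + (v'-u_\e')h\bigr),
\]
centered at $u_\e'$. This semigroup preserves mass and first moment, drives $f_\e'$ to $\mu_\e'$ as $s\to\infty$, and satisfies $\int L h\,\dv' = 0$. This gives
\[
\int G\,(f_\e'-\mu_\e') = -\int_0^\infty\!\!\int G\,L(e^{sL}f_\e')\,\dv'\,\ds = \int_0^\infty\!\!\int \n G\cdot \iota(s)\,\dv'\,\ds,
\]
where $\iota(s)=\s\n e^{sL}f_\e' + (v'-u_\e')e^{sL}f_\e'$. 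We then use the Cauchy--Schwarz bound
\[
\Bigl|\int \n G\cdot\iota(s)\Bigr| \le \|\n G\|_\infty\,\rho_\e'^{1/2}\,\cI(e^{sL}f_\e')^{1/2},
\]
together with the exponential decay of the relative Fisher information along the Ornstein--Uhlenbeck flow (Bakry--\'Emery), $\cI(e^{sL}f)\le e^{-2s}\cI(f)$. Integrating in $s$ yields the constant $C$.

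A simpler alternative avoids semigroups. By Kantorovich--Rubinstein duality, $|\int G(f-\mu)|\le \|\n G\|_\infty W_1(f,\mu)$. Then $W_1\le W_2$ together with the Gaussian log-Sobolev/Talagrand inequalities gives
\[
W_2^2 \le 2\s\,\rho\cdot\cH \le C\rho\cdot(\text{Fisher})/\rho^2 ,
\]
after normalizing by the local mass. Keeping track of the powers of $\rho_\e'$ under this normalization, one again obtains exactly $\sqrt{\rho_\e'\cI_{x'}(f_\e)}$. We would follow this route.

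\emph{Main obstacle.} The delicate step is the mass normalization and the justification at weak regularity. One must apply log-Sobolev to the probability density $f_\e'/\rho_\e'$, check the scaling of the Fisher information $\cI_{x'}(f_\e)/\rho_\e'$, and handle points where $\rho_\e'=0$, where everything vanishes. Finiteness of $\cI_{x'}$ holds for a.e.\ $x'$ by Lemma~\ref{l:en-bounds}, and there one approximates $f_\e$ by mollified densities. The first piece, $(f_\e-\mu_\e)f_\e'$, is handled symmetrically using $G(v)=\int F(v'-v)f_\e'\,\dv'$, whose Lipschitz constant is $\rho_\e'\|\n F\|_\infty$, and this gives the term $\rho_\e'\sqrt{\rho_\e\cI_x(f_\e)}$.
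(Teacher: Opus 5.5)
Your proposal is correct and follows essentially the paper's route: a telescoping split of $f_\e f_\e'-\mu_\e\mu_\e'$, then a Kantorovich--Rubinstein bound on each one-point difference, $W_1\le W_2$, and the Gaussian Talagrand and log-Sobolev inequalities applied to the normalized densities $f_\e/\rho_\e$ with $\cI(f_\e/\rho_\e)=\cI_x(f_\e)/\rho_\e$ (you insert the cross term $\mu_\e f_\e'$ where the paper inserts $f_\e\mu_\e'$, which is immaterial). Your intermediate display $W_2^2\le 2\s\rho\cdot\cH\le C\rho\cdot(\text{Fisher})/\rho^2$ mixes normalized and unnormalized quantities, but the clean chain $\rho_\e' W_1(f_\e'/\rho_\e',\mu_\e'/\rho_\e')\le \rho_\e'\sqrt{\cI_{x'}(f_\e)/\rho_\e'}=\sqrt{\rho_\e'\cI_{x'}(f_\e)}$ gives exactly the bound you state.
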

\begin{proof} 
Let us first normalize all the distributions $\tilde{f}_\e = f_\e / \rho_\e$, $\tilde{\mu}_\e = \mu_\e / \rho_\e$:
\[
\iint_{\R^{2d}}F(v'-v)\bigl(f_\e f_\e'-\mu_\e\mu_\e'\bigr)\,\dv\,\dv' =\rho_\e \rho'_\e \iint_{\R^{2d}}F(v'-v)\bigl(\tilde{f}_\e \tilde{f}'_\e- \tilde{\mu}_\e \tilde{\mu}'_\e\bigr)\,\dv\,\dv'.
\]  
Next, we add and subtract the cross-product term $\tilde{f}_\e \tilde{\mu}'_\e$:
\[
= \rho_\e \rho'_\e \int_{\R^{d}} \tilde{f}_\e \left( \int_{\R^{d}} F(v'-v)\bigl(\tilde{f}'_\e- \tilde{\mu}'_\e\bigr)\,\dv' \right) \dv +\rho_\e \rho'_\e \int_{\R^{d}} \tilde{\mu}'_\e \left( \int_{\R^{d}} F(v'-v)\bigl(\tilde{f}_\e- \tilde{\mu}_\e\bigr)\,\dv \right) \dv'.
\]
Taking absolute values, we apply the $W_1$-metric to the inner integrals, which is bounded by the $W_2$-metric, resulting in:
\[
\lesssim \rho_\e \rho'_\e \| \n F \|_\infty\Bigl( W_2\bigl( \tilde{f}_\e(x,\cdot), \tilde{\mu}_\e(x,\cdot) \bigr) +  W_2\bigl( \tilde{f}_\e(x',\cdot), \tilde{\mu}_\e(x',\cdot) \bigr)\Bigr).
\]
By the Gaussian Talagrand $T_2$ transport--entropy inequality and the Gaussian logarithmic Sobolev inequality, with our normalization of $\cH$ and $\cI_x$,
\[
 W_2^2\bigl(\tilde f_\e(x,\cdot),\tilde\mu_\e(x,\cdot)\bigr)
 \le 2\cH\bigl(\tilde f_\e(x,\cdot)|\tilde\mu_\e(x,\cdot)\bigr)
 \le\frac{\cI_x(f_\e)}{\rho_\e(x)}.
\]
The same estimate holds at $x'$. Multiplying by $\rho_\e\rho_\e'$ proves \eqref{eq:transport-fisher}.
\end{proof}

Noting that $|\n(\grad\chi_R\cdot\Phi)| \lesssim 1 + R^{p-3}$ we apply Lemma \ref{lem:transport-Fisher} and obtain
\begin{equation}\label{e:Cout}
| \cC_{\e,R}(x,x') | \leq C(1 + R^{p-3}) \Bigl(\rho_\e\sqrt{\rho_\e'\cI_{x'}(f_\e)} +\rho_\e'\sqrt{\rho_\e\cI_{x}(f_\e)}\Bigr).
\end{equation}

Let us now go back to \eqref{e:Rem<R} and analyze the remaining integral with the kernel $K_{\chi_R \Phi}$. We will use the following decomposition.

\begin{lemma}\label{lem:fisher-rep}
Let $\Sigma\in C_c^1(\R^d;\R^d)$. The following decomposition holds:
\begin{equation}\label{eq:idfisher}
\iint_{\R^{2d}}K_\Sigma(x,x',v'-v)
  (f_\e f_\e'-\mu_\e\mu_\e')\,\dv\,\dv'
  =\cG_{\Sigma,\e}(x,x')+\cL_{\Sigma,\e}(x,x'),
\end{equation}
where
\begin{align}
 \cG_{\Sigma,\e}(x,x')
  &:=-\frac12U_\e(x,x')\cdot
  \iint_{\R^{2d}}\Sigma(v'-v)
  (f_\e f_\e'-\mu_\e\mu_\e')\,\dv\,\dv',
  \label{eq:general-GR-sym}\\
 \cL_{\Sigma,\e}(x,x')
  &:=\frac12\iint_{\R^{2d}}\Sigma(v'-v)\cdot
  (\iota_\e f_\e'-\iota_\e'f_\e)\,\dv\,\dv'.
  \label{eq:general-LR-restricted}
\end{align}
\end{lemma}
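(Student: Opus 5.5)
The identity is purely algebraic and pointwise in $(x,x')$. I would split $K_\Sigma$ exactly as in \eqref{eq:K-decomp}, with $\Sigma$ in place of $\Phi$. Using \eqref{eq:delta}, $u-u'=U_\e(x,x')-(u_\e'-u_\e)$, so for $w=v'-v$,
\[
K_\Sigma(x,x',w)=-\tfrac12\bigl(w-(u_\e'-u_\e)\bigr)\cdot\Sigma(w)+\s\,\grad\cdot\Sigma(w)-\tfrac12U_\e(x,x')\cdot\Sigma(w)=K_{0,\e}^\Sigma+K_{1,\e}^\Sigma .
\]
The $K_{1,\e}^\Sigma$ part does not depend on $(v,v')$ except through $\Sigma$. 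Integrated against $f_\e f_\e'-\mu_\e\mu_\e'$, it gives exactly $\cG_{\Sigma,\e}(x,x')$ in \eqref{eq:general-GR-sym}. The remaining task is to show that the $K_{0,\e}^\Sigma$ part equals $\cL_{\Sigma,\e}$.

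For the $f_\e f_\e'$ contribution of $K_{0,\e}^\Sigma$, I symmetrize the divergence term. Since $\nabla_{v'}\Sigma(v'-v)=-\nabla_v\Sigma(v'-v)=(\nabla\Sigma)(v'-v)$, integrating by parts in $v'$ and, separately, in $v$ gives
\[
\s\iint \grad\cdot\Sigma(v'-v)f_\e f_\e'
=-\s\iint \Sigma(v'-v)\cdot\nabla_{v'}f_\e'\,f_\e
=\s\iint\Sigma(v'-v)\cdot\nabla_v f_\e\,f_\e'.
\]
I then take the average of the two expressions. For the transport part, write $v'-v-(u_\e'-u_\e)=(v'-u_\e')-(v-u_\e)$, so that
\[
-\tfrac12\iint\bigl((v'-u_\e')-(v-u_\e)\bigr)\cdot\Sigma\, f_\e f_\e'=\tfrac12\iint\Sigma\cdot\bigl((v-u_\e)f_\e f_\e'-(v'-u_\e')f_\e'f_\e\bigr).
\]
Adding the averaged divergence term, the integrand becomes $\tfrac12\Sigma\cdot\bigl(\iota_\e f_\e'-\iota_\e' f_\e\bigr)$, which is \eqref{eq:general-LR-restricted}.

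For the $\mu_\e\mu_\e'$ contribution, the same computation applies with $\iota$ replaced by $\s\nabla_v\mu_\e+(v-u_\e)\mu_\e\equiv0$. Hence it vanishes, which is the content of \eqref{eq:K0-Maxwellian} applied to $\Sigma$. Alternatively one can invoke the Gaussian integration-by-parts computation from Lemma~\ref{lem:K0K1}, which only used that $\Sigma$ is $C^1$ with controlled growth. Summing the two parts gives \eqref{eq:idfisher}.

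The only point needing care is justifying the integration by parts for admissible weak solutions, where $f_\e$ is not smooth. Here I would use that $\Sigma$ is $C^1$ with compact support, so no boundary terms arise in $v$. I would also use that at a.e.\ $(t,x)$ with $\cI_x(f_\e)<\infty$ (a.e.\ by Lemma~\ref{l:en-bounds}) one has
\[
\s\nabla_vf_\e=\iota_\e-(v-u_\e)f_\e=\frac{\iota_\e}{\sqrt{f_\e}}\sqrt{f_\e}-(v-u_\e)f_\e\in L^1_v
\]
by Cauchy--Schwarz and the finite second moment. Thus $f_\e(x,\cdot)\in W^{1,1}(\R^d)$, and the integration by parts against the bounded compactly supported $\Sigma$ is legitimate. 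This regularity step is the main, though mild, obstacle; the rest is bookkeeping of signs.
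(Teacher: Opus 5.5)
Your proof is correct and follows the paper's argument. It uses the same split $K_\Sigma=K_{0,\e}^\Sigma+K_{1,\e}^\Sigma$, the $K_{1,\e}^\Sigma$ part gives $\cG_{\Sigma,\e}$ directly, and the symmetrized integration by parts of the divergence term in $v$ and $v'$ produces $\cL_{\Sigma,\e}$. Your observation that the Maxwellian contribution vanishes because $\s\nabla_v\mu_\e+(v-u_\e)\mu_\e\equiv0$ is equivalent to the paper's Gaussian integration by parts in \eqref{eq:K0-Maxwellian}, and your $W^{1,1}$ justification of the integration by parts supplies a detail the paper leaves implicit.
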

\begin{proof}
Recall the decomposition \eqref{eq:K-decomp}:
\[
K_{\Sigma}(x,x',w) = K_{0,\e}^{\Sigma}(x,x',w) + K_{1,\e}^{\Sigma}(x,x',w).
\]
For the $K_{1,\e}^{\Sigma}$-part, we directly use \eqref{eq:K1} and obtain
\[
\iint_{\R^{2d}} K_{1,\e}^{\Sigma}(x,x',v'-v) \bigl(f_\e f_\e'-\mu_\e\mu_\e'\bigr) \,\dv\,\dv'
=\cG_{\Sigma,\e}(x,x').
\]
For the $K_{0,\e}^{\Sigma}$-part, from \eqref{eq:K0-Maxwellian} we know
\[
\iint_{\R^{2d}} K_{0,\e}^{\Sigma}(x,x',v'-v) \mu_\e\mu_\e'\,\dv\,\dv'=0.
\]
Therefore it remains to compute
\begin{align}\label{eq:K0ff}
&\iint_{\R^{2d}} K_{0,\e}^{\Sigma}(x,x',v'-v) f_\e f_\e'\,\dv\,\dv'\notag\\
& = -\frac12 \iint_{\R^{2d}} \Bigl((v'-u_\e')-(v-u_\e)\Bigr) \cdot\Sigma(v'-v) f_\e f_\e' \,\dv\,\dv'
+ \sigma \iint_{\R^{2d}} (\nabla\cdot\Sigma)(v'-v) f_\e f_\e' \,\dv\,\dv'.
\end{align}
For the second term, integration by parts in $v$ and $v'$ gives
\[
\sigma \iint_{\R^{2d}} (\nabla\cdot\Sigma)(v'-v) f_\e f_\e' \,\dv\,\dv'
= \frac{\sigma}{2} \iint_{\R^{2d}} \Sigma(v'-v)\cdot \Bigl((\nabla_v f_\e)f_\e'-f_\e\nabla_{v'}f_\e'\Bigr) \,\dv\,\dv'.
\]
Substituting this into \eqref{eq:K0ff}, regrouping, and using the definition of $\iota_\e$, we obtain
\begin{align*}
&\iint_{\R^{2d}} K_{0,\e}^{\Sigma}(x,x',v'-v) f_\e f_\e' \,\dv\,\dv'\\
& = \frac12 \iint_{\R^{2d}} \Sigma(v'-v)\cdot \Bigl( \bigl(\sigma\nabla_v f_\e+(v-u_\e)f_\e\bigr)f_\e' - \bigl(\sigma\nabla_{v'}f_\e' +(v'-u_\e')f_\e'\bigr)f_\e \Bigr) \,\dv\,\dv'\\
& = \frac12 \iint_{\R^{2d}} \Sigma(v'-v)\cdot \bigl( \iota_\e f_\e' - \iota_\e' f_\e \bigr) \,\dv\,\dv'
= \cL_{\Sigma,\e}(x,x').
\end{align*}
\end{proof}

Applying the lemma to the cut-off kernel $\Sigma = \chi_R \Phi$, we obtain
\begin{equation}
\iint_{\R^{2d}}  K_{\chi_R \Phi} \bigl(x,x',v'-v\bigr)
  \bigl(f_\e f_\e'-\mu_\e\mu_\e' \bigr)\,\dv\,\dv' =  \cG_{\chi_R \Phi,\e}(x,x') + \cL_{\chi_R \Phi,\e}(x,x').
\end{equation}

From \eqref{H1-growth}, 
\begin{equation}\label{eq:Phi-in}
  \|\n(\chi_R \Phi) \|_{L^\infty(\R^d)}\leq C(1+R^{p-2}).
 \end{equation}
So, by Lemma ~\ref{lem:transport-Fisher} with $F=\chi_R \Phi$, 
\[
 |\cG_{\chi_R \Phi,\e}(x,x')|  \le C(1+R^{p-2}) \Bigl(\rho_\e\sqrt{\rho_\e'\cI_{x'}(f_\e)} +\rho_\e'\sqrt{\rho_\e\cI_{x}(f_\e)}\Bigr).
\]

As to $\cL_{\chi_R\Phi,\e}(x,x')$, since
\[
 \int_{\R^d}\iota_\e(x,v)\,\dv=0,
 \qquad
 \int_{\R^d}\iota_\e(x',v')\,\dv'=0,
\]
we may subtract $(\chi_R\Phi)(v'-u_\e)$ from the kernel in the first score term and $(\chi_R\Phi)(u_\e'-v)$ in the second. We get
\begin{align*}
 |\cL_{\chi_R\Phi,\e}(x,x')|
 &\le\frac12\|\nabla(\chi_R\Phi)\|_{L^\infty}
 \left(\rho_\e'\int_{\R^d}|v-u_\e||\iota_\e|\,\dv
 +\rho_\e\int_{\R^d}|v'-u_\e'||\iota_\e'|\,\dv'\right)\\
 &\le C(1+R^{p-2})\left(
 \rho_\e'\sqrt{\cI_x(f_\e)\cV_\e(x)}
 +\rho_\e\sqrt{\cI_{x'}(f_\e)\cV_\e(x')}\right),
\end{align*}
where $\cV_\e$ denotes the centered second moment
\[\cV_\e(x)=\int_{\R^d}|v-u_\e|^2f_\e\,\dv.\]

Summarizing the obtained estimates, we arrive at
\begin{equation}\label{e:Rem<est}
\begin{split}
\Rem_{\e}^{ < R}(x,x') &\leq  C(1 + R^{p-2}) \Bigl(\rho_\e\sqrt{\rho_\e'\cI_{x'}(f_\e)} +\rho_\e'\sqrt{\rho_\e\cI_{x}(f_\e)}\Bigr) \\
&+C(1+R^{p-2})\left(
 \rho_\e'\sqrt{\cI_x(f_\e)\cV_\e(x)}
 +\rho_\e\sqrt{\cI_{x'}(f_\e)\cV_\e(x')}\right).
\end{split}
\end{equation}

Now, we combine \eqref{e:RAS}, \eqref{e:Rout}, and \eqref{e:Rem<est}, and integrate in $\phi(x-x')$. Note that
\[
 \int_\Omega\rho_\e\,\dx=1,\quad
 \int_\Omega\cV_\e\,\dx\le\cM_2(f_\e)\le C,\quad\text{and}\quad
 \int_\Omega\cI_x(f_\e)\,\dx = \cI(f_\e).
\]
Thus, for all sufficiently large $R$,
\begin{equation}\label{eq:final}
\Rem_\e-A_1\leq\iint_{E_S}\phi(x-x')\Rem_\e(x,x')\,\dx\,\dx'
\leq C(1+R^{p-2})\sqrt{\cI(f_\e)}
 +
 C e^{-cR^2}.
\end{equation}

\section{Proof of Theorem \ref{thm:poly-moment}}\label{sec:proof-main}
In this section, we finish the proof of Theorem \ref{thm:poly-moment}.

Insert the remainder estimate \eqref{eq:final} into \eqref{eq:Hfinal} and obtain
\begin{equation}\label{eq:general-H-before-Young}
 \ddt\cH(f_\e|\mu)
 +\frac{1}{2\e}\cI(f_\e)
 \le
 C\cH(f_\e|\mu)
 +C(1+R^{p-2})\sqrt{\cI(f_\e)}
 +Ce^{-cR^2}.
\end{equation}
By Young's inequality,
\[
 C(1+R^{p-2})\cI(f_\e)^{1/2}
 \le \frac{1}{4\e}\cI(f_\e) +C\e(1+R^{2p-4}).
\]
Hence
\[
 \ddt\cH(f_\e|\mu) + \frac{1}{4\e}\cI(f_\e)
 \le C\cH(f_\e|\mu) +C\e(1+R^{2p-4}) +Ce^{-cR^2}.
\]
Choose
\[
 R=\max\left\{R_0,\sqrt{c^{-1}|\log\e|}\right\},
\]
where $R_0\ge1$ is fixed large enough for all the estimates in Section~\ref{sec:remainder}. Then $e^{-cR^2}\le\e$ and $1+R^{2p-4}\le C(1+|\log\e|^{p-2})$. It yields
\[
 \ddt\cH(f_\e|\mu) + \frac{1}{4\e}\cI(f_\e)
 \le C\cH(f_\e|\mu) +C\e(1+|\log\e|^{p-2}).
\]
Applying Gronwall's inequality and by the quantitative well-preparedness assumption \eqref{H3-full-entropy}, we obtain
\begin{equation}\label{eq:general-H-rate}
 \sup_{0\le t\le T}\cH(f_\e|\mu)(t) + \frac{1}{4\e}\int_0^T\cI(f_\e(t))\,\dd t
 \le
 C_T\e\bigl(1+|\log\e|^{p-2}\bigr).
\end{equation}
Finally, the decomposition
\[
  \cH(f_\e|\mu)
  =\cH(f_\e|\mu_\e)+\cH(\mu_\e|\mu)
\]
and the Csisz\'ar--Kullback estimate give \eqref{eq:poly-kinetic-L1-sec2} and the density estimate. For the momentum, we also use
\[
 \|\rho_\e u_\e-\rho u\|_{L^1}
 \le\left(\int_\Omega\rho_\e|u_\e-u|^2\,\dx\right)^{1/2}
 +\|u\|_{L^\infty}\|\rho_\e-\rho\|_{L^1}.
\]
Consequently,
\begin{equation}\label{eq:general-macro-rate}
 \sup_{0\le t\le T}
 \left(
 \|\rho_\e-\rho\|_{L^1(\Omega)}
 +\|\rho_\e u_\e-\rho u\|_{L^1(\Omega)}
 \right)
 \le
 C_T\sqrt\e\,
 \bigl(1+|\log\e|^{(p-2)/2}\bigr).
\end{equation}

\bibliographystyle{plain}

\end{document}